\documentclass[a4paper,12pt]{article}
\usepackage{amssymb,amsmath,graphicx,color}
\usepackage[colorlinks=true,citecolor=blue,urlcolor=blue,linkcolor=blue,bookmarksopen=true,unicode=true,pdffitwindow=true]{hyperref}
\usepackage{amsthm}
\usepackage[font=small,labelfont=bf,width=12.5cm]{caption}

\addtolength{\textwidth}{30mm}
\addtolength{\hoffset}{-15mm}
\addtolength{\textheight}{30mm}
\addtolength{\voffset}{-15mm}

\newtheorem{theorem}{Theorem}
\newtheorem{lemma}[theorem]{Lemma}

\newtheorem{proposition}[theorem]{Proposition}
\newtheorem{conjecture}[theorem]{Conjecture}

\newtheorem{problem}{Problem}

\newcommand{\num}{(3\,\Delta + 6)}

\newcommand{\set}[1]{\ensuremath{\left\{#1 \right\}}}

\title{Strong edge coloring of planar graphs}
\author{D{\'a}vid Hud{\'a}k\thanks{Institute of Mathematics, Faculty of Science, Pavol Jozef \v Saf\'arik University, Ko\v sice, Slovakia. 
	Supported in part by bilateral project SK-SI-0005-10 between Slovakia and Slovenia (all the authors), by VVGS 
	grant No. 617-B (I-10-032-00) (D. Hud\'ak), and by Slovak Research and Development Agency under contract 
	No. APVV-0023-10 and Slovak VEGA Grant No. 1/0652/12 (R. Sot\'ak). E-Mails: \texttt{david.hudak@student.upjs.sk, roman.sotak@upjs.sk}} 
\and Borut Lu{\v z}ar\thanks{Institute of Mathematics, Physics and Mechanics, Ljubljana, Slovenia. 
	Operation partially financed by the European Union, European Social Fund. E-Mail: \texttt{borut.luzar@gmail.com}} 
\and Roman Sot{\'a}k\footnotemark[1]  
\and Riste {\v S}krekovski\thanks{Department of Mathematics, University of Ljubljana, Ljubljana and Faculty of Information Studies, Novo mesto, Slovenia.  
	Partially supported by ARRS Program P1-0383. E-Mail: \texttt{skrekovski@gmail.com}}
}

\begin{document}
\maketitle
\abstract{\textit{A strong edge coloring} of a graph is a proper edge coloring where the edges at distance at most two receive
		  distinct colors. It is known that every planar graph with maximum degree $\Delta$ has a strong
		  edge coloring with at most $4\,\Delta + 4$ colors. We show that $3\,\Delta + 6$ colors suffice
		  if the graph has girth $6$, and $3\,\Delta$ colors suffice if the girth is at least $7$. Moreover,
		  we show that cubic planar graphs with girth at least $6$ can be strongly edge colored with at most $9$ colors.}

\bigskip
{\noindent\small \textbf{Keywords:} Strong edge coloring, strong chromatic index, planar graph, discharging method}
  
\section{Introduction}

A \textit{strong edge coloring} of a graph $G$ is a proper edge coloring where every color class induces a matching, i.e.,
every two edges at distance at most two receive distinct colors. The smallest number of colors for which 
a strong edge coloring of a graph $G$ exists is called the \textit{strong chromatic index}, $\chi_s'(G)$.
In 1985,  Erd\H{o}s and Ne\v{s}et\v{r}il posed the following conjecture during a seminar in Prague.
\begin{conjecture}[Erd\H{o}s, Ne\v{s}et\v{r}il]
	\label{con:basic}
	Let $G$ be a graph with maximum degree $\Delta$. Then,
	$$
		\chi_s'(G) \le \left \{ \begin{array}{cl}
								\frac{5}{4} \Delta^2\,, &\quad \Delta \textrm{ is even;} \vspace{0.3cm}\\ 
								\frac{1}{4}(5\Delta^2 - 2\Delta +1)\,, &\quad \Delta \textrm{ is odd.}
								\end{array} \right.
	$$
\end{conjecture}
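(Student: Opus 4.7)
The plan is to attack the Erd\H{o}s--Ne\v{s}et\v{r}il conjecture via a hybrid strategy combining the semi-random (nibble) method with a structural dichotomy argument tailored to the known extremal family of $C_5$-blow-ups. Recast the problem as vertex coloring of the auxiliary graph $H := L(G)^2$; the target becomes $\chi(H) \le \lfloor 5\Delta^2/4 \rfloor$ (with the appropriate parity correction when $\Delta$ is odd). Since $\Delta(H) \le 2\Delta(\Delta-1)$, the naive greedy bound is $2\Delta^2 - 2\Delta + 1$, exceeding the target by a multiplicative constant, so the real work is to exploit \emph{local non-cliquishness} in $H$.

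Step one is a local clique lemma: every clique in $H$ has size at most $\lfloor 5\Delta^2/4 \rfloor$, with equality forcing the underlying edges of $G$ to lie inside a blow-up of $C_5$. A clique in $H$ corresponds to a set of pairwise-adjacent edges of $G$, and a case analysis on the \emph{shape} of such a set (edges through a common vertex versus matching-like configurations around a short cycle) yields the bound with an explicit extremal characterization. Step two applies an iterative random coloring scheme to $H$: assign uniformly random colors to uncolored vertices, retain those receiving a color unique in their $H$-neighborhood, update, and iterate. The Lov\'asz Local Lemma ensures progress whenever bad events are sufficiently sparse, and the local clique bound provides the required density deficit in every neighborhood that is not a near-$C_5$-blow-up. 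Step three is the structural fallback: if the semi-random method stalls at a vertex $xy$ of $H$, conclude that the edges of $G$ near $xy$ embed into a $C_5$-blow-up modulo a bounded number of exceptional edges, and color such a subgraph by hand using the canonical partition of its edge set into $\lfloor 5\Delta^2/4 \rfloor$ induced matchings. The parity refinement for odd $\Delta$ is then obtained by carrying one fewer color in a single class, exploiting the fact that the extremal construction itself exhibits this parity drop.

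The main obstacle is closing the quantitative gap. All existing semi-random arguments yield bounds of the form $c\,\Delta^2$ with $c$ bounded away from $5/4$ (the current record is roughly $1.77\,\Delta^2$, due to Bonamy, Perrett, and Postle together with subsequent refinements), and the constant $c$ appears intrinsic to any analysis based on union bounds over random color conflicts. Bridging the gap requires either a concentration inequality keyed \emph{specifically} to the $C_5$-blow-up obstruction, so that the standard Talagrand-type losses collapse precisely where the extremal example lives, or a genuine Brooks-type dichotomy proving that graphs saturating the local clique bound must globally be $C_5$-blow-ups. It is this passage from near-extremal to exactly extremal structure that has resisted all known techniques for nearly four decades, and it is where I expect the bulk of the effort to lie.
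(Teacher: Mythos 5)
The statement you are trying to prove is stated in the paper as a \emph{conjecture}, not a theorem: the paper gives no proof of it, records only that it is tight if true, and cites Molloy and Reed's $1.998\,\Delta^2$ as the best known bound for large $\Delta$. The problem remains open, and your text is a research programme rather than a proof --- as you yourself concede in the final paragraph.

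The concrete gaps are these. Step one (the local clique lemma for $H = L(G)^2$) is fine in substance; a set of pairwise conflicting edges has size at most roughly $\tfrac{5}{4}\Delta^2$ by the result of Chung, Gy\'arf\'as, Trotter and Tuza on graphs with no induced $2K_2$. But a bound on $\omega(H)$ does not bound $\chi(H)$, and the mechanism you propose to bridge that gap does not exist. In Step two, the iterative random coloring with the Lov\'asz Local Lemma is exactly the Molloy--Reed technique, and its analysis loses a multiplicative constant that no known refinement pushes below roughly $1.77$; you cannot simply assert that the ``local clique bound provides the required density deficit'' down to $5/4$, because the deficit it provides is quantitatively far too small --- this is precisely why the constant has been stuck near $2$ for decades. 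In Step three, the claim that a stalled nibble forces the local structure to be a near-blow-up of $C_5$ is not a theorem; no stability or Brooks-type result of this kind is known for $L(G)^2$, and deriving exact extremal structure from near-extremal local density is the open heart of the problem, not a fallback you may invoke. The parity refinement for odd $\Delta$ inherits all of these gaps. In short: nothing after Step one is established, and the paper offers no proof for you to match because none is known.
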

\noindent They also presented the construction, which shows that Conjecture~\ref{con:basic}, if true, is tight.
In 1997, Molloy and Reed~\cite{MolRee97} established currently the best known upper bound for the strong chromatic index
of graphs with sufficiently large maximum degree.
\begin{theorem}[Molloy, Reed]
	For every graph $G$ with sufficiently large maximum degree $\Delta$ it holds that
	$$
		\chi_s'(G) \le 1.998\,\Delta^2.
	$$
\end{theorem}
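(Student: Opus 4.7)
The plan is to bound $\chi_s'(G)$ by studying the graph $H := L(G)^2$, whose vertices are the edges of $G$ and in which two edges are adjacent whenever they lie at distance at most two in $G$. A strong edge coloring of $G$ is precisely a proper vertex coloring of $H$, so it suffices to prove $\chi(H) \le 1.998\,\Delta^2$. The trivial observation is that $H$ has maximum degree at most $D := 2\Delta^2 - 2\Delta$, since an edge $uv \in E(G)$ is in conflict only with edges incident to $u$, $v$, or to a neighbor of $u$ or $v$. Greedy coloring thus gives $\chi(H) \le D + 1 \approx 2\Delta^2$, and the whole game is to shave a multiplicative constant off this bound.

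The core of the approach is a general result of Molloy and Reed that upgrades the greedy bound whenever the graph has \emph{locally sparse} neighborhoods: if every vertex of $H$ has at least a constant fraction $\epsilon\binom{D}{2}$ of non-edges in its neighborhood (equivalently, the neighborhood misses many triangles with the central vertex), then $\chi(H) \le (1 - \delta(\epsilon))\,D$. The proof of this meta-theorem is by the semi-random (R\"odl nibble) method: one iteratively colors a random subset of vertices, uses the Lov\'asz Local Lemma to show that after each round a substantial density of ``wasted'' colors at each vertex can be recycled, and tracks the evolution of degrees and available color lists through standard concentration inequalities (Azuma and Talagrand). To conclude $1.998\,\Delta^2$ one needs $\delta(\epsilon)$ to exceed roughly $0.001$, which Molloy and Reed extract by optimizing constants inside the nibble.

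The task therefore reduces to verifying the local sparsity hypothesis for $H = L(G)^2$. Here is the key geometric fact: take an edge $e = uv$ and look at the collection $N_H(e)$ of up to $\approx 2\Delta^2$ edges at distance $\le 2$ from it. Many pairs of edges inside $N_H(e)$ are themselves at distance $\le 2$, i.e.\ adjacent in $H$, rather than non-adjacent. For instance, any two edges sharing a common endpoint $u$ are adjacent in $H$, contributing $\binom{\Delta-1}{2}$ edges inside the neighborhood, and the same holds at $v$; edges through a common vertex $w$ adjacent to $u$ contribute another $\binom{\Delta-1}{2}$; and so on. A careful double count shows that the neighborhood of each $e$ in $H$ induces at least a constant fraction of $\binom{D}{2}$ edges, equivalently misses a constant fraction of potential pairs, provided $\Delta$ is large enough. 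Feeding this into the Molloy--Reed meta-theorem with the appropriate $\epsilon$ yields the claimed bound $1.998\,\Delta^2$.

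The main obstacle, and the place where the argument is delicate, is quantitative: one must extract an $\epsilon$ large enough so that the resulting $\delta(\epsilon)$ from the nibble analysis beats $1 - \tfrac{1.998}{2} = 0.001$. This is a two-sided squeeze, trading off how much local sparsity one can guarantee in $L(G)^2$ against how efficiently the semi-random coloring can exploit that sparsity, and it is the reason the bound is stated for $\Delta$ sufficiently large rather than for all $\Delta$. The rest of the proof — the combinatorial counting of conflicts and the standard concentration bookkeeping in the nibble — is mechanical but lengthy, and would be referred to \cite{MolRee97} rather than reproduced here.
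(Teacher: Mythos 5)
The paper does not prove this statement at all --- it is quoted verbatim from Molloy and Reed \cite{MolRee97} as background, so there is no in-paper argument to compare yours against. Your sketch does correctly reproduce the architecture of the original proof: pass to $H=L(G)^2$, observe $\Delta(H)\le 2\Delta^2-2\Delta$, and invoke the Molloy--Reed theorem that a graph of maximum degree $D$ in which every neighbourhood spans at most $(1-\epsilon)\binom{D}{2}$ edges has chromatic number at most $(1-\delta(\epsilon))D$; your arithmetic $1-\tfrac{1.998}{2}=0.001$ for the required $\delta$ is also right.

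However, the one step you actually argue --- the local sparsity of $L(G)^2$ --- is argued in the wrong direction. You list families of pairs inside $N_H(e)$ that \emph{are} adjacent in $H$ (edges sharing the endpoint $u$, edges through a common neighbour $w$, etc.) and conclude that the neighbourhood ``induces at least a constant fraction of $\binom{D}{2}$ edges, equivalently misses a constant fraction of potential pairs.'' That equivalence is false: a neighbourhood can contain a positive fraction of all pairs and still miss only $o(D^2)$ of them, and it is the \emph{missing} pairs that the hypothesis requires. The correct verification exhibits many non-adjacent pairs explicitly, e.g.\ for $e=uv$ one takes an edge $w w'$ reached through a neighbour $w$ of $u$ and an edge $x x'$ reached through a neighbour $x$ of $v$; for most such choices these two edges are at distance greater than $2$ from each other, and a double count shows this happens for at least $\epsilon\binom{D}{2}$ pairs with an absolute constant $\epsilon>0$. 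Beyond that, your final paragraph defers the nibble analysis and the conflict count back to \cite{MolRee97} itself, so as a proof of the cited theorem the proposal is circular; as an expository outline of how that reference proceeds it is essentially accurate once the direction of the sparsity count is fixed.
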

In 1990, Faudree et al.~\cite{FauGyaSchTuz90} proposed several problems regarding subcubic graphs.
\begin{problem}[Faudree et al.]
	\label{prob:cubic}
	Let $G$ be a subcubic graph. Then,
	\begin{enumerate}
		\item{} $\chi_s'(G) \le 10$;
		\item{} $\chi_s'(G) \le 9$ if $G$ is bipartite;
		\item{} $\chi_s'(G) \le 9$ if $G$ is planar;
		\item{} $\chi_s'(G) \le 6$ if $G$ is bipartite and the weight of each edge is at most $5$;
		\item{} $\chi_s'(G) \le 7$ if $G$ is bipartite of girth $6$;
		\item{} $\chi_s'(G) \le 5$ if $G$ is bipartite and has girth large enough.
	\end{enumerate}
\end{problem}
Andersen~\cite{And92} confirmed that Conjecture~\ref{con:basic} holds for subcubic graphs, i.e., that the strong chromatic index of
any subcubic graph is at most $10$, which solves also the first item of Problem~\ref{prob:cubic}. The second item of Problem~\ref{prob:cubic}
was confirmed by Steger and Yu~\cite{SteYu93}.

In this paper, we consider planar graphs with bounded girth. In 1990, Faudree et al.~\cite{FauGyaSchTuz90} found a construction of planar
graphs showing that for every integer $k \ge 2$ there exists a planar graph $G$ with maximum degree $k$ and $\chi_s'(G) = 4\, k - 4$.
Moreover, they proved the following theorem.
\begin{theorem}[Faudree et al.]
	\label{thm:planar}
	Let $G$ be a planar graph with maximum degree $\Delta$. Then,
	$$
		\chi_s'(G) \le 4\, \Delta + 4 \,.
	$$
\end{theorem}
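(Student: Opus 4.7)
The plan is to combine Vizing's theorem with the Four Color Theorem via a simple auxiliary construction. First, I would apply Vizing's theorem to properly edge-color $G$ with $\Delta + 1$ colors, partitioning $E(G)$ into matchings $M_1, \ldots, M_{\Delta+1}$. If each $M_i$ can be further split into at most $4$ strong matchings, then allocating $4$ fresh colors per class yields a strong edge coloring using $4(\Delta + 1) = 4\Delta + 4$ colors, as desired.

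To split a fixed $M_i$, I would form an auxiliary graph $H_i$ with vertex set $M_i$, joining $e, f \in M_i$ whenever they lie at distance at most $2$ in $G$. Since $M_i$ is a matching, its distinct edges never share an endpoint, so this adjacency is equivalent to the existence of a $G$-edge joining an endpoint of $e$ to an endpoint of $f$. Any proper $k$-coloring of $H_i$ then furnishes a partition of $M_i$ into $k$ strong matchings, which is exactly what we need.

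The key observation is that $H_i$ is planar. Contract each edge of $M_i$ in a plane embedding of $G$ to a single vertex; since edge contraction preserves planarity, the resulting graph $G_i'$ is planar. By construction, two contracted vertices are adjacent in $G_i'$ precisely when some edge of $G$ joins their preimages, so $H_i$ is the subgraph of $G_i'$ induced by the contracted vertices. Hence $H_i$ is planar, and the Four Color Theorem gives $\chi(H_i) \le 4$, closing the argument.

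The main obstacle is recognizing that the distance-$\le 2$ conflicts inside a Vizing color class carry a planar structure; once this is spotted, the rest is a short chain of standard reductions. A minor technical point is that the contraction may produce multiple edges or loops, but the underlying simple graph still captures the desired adjacencies and remains planar. Appealing only to the Five Color Theorem would weaken the bound to $5\Delta + 5$, so the Four Color Theorem is essential to reach the stated constant.
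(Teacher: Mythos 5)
Your proposal is correct and follows essentially the same route as the paper: Vizing's theorem gives $\Delta+1$ matchings, each matching's distance-$\le 2$ conflict graph is realized as (a subgraph of) the planar graph obtained by contracting that matching, and the Four Color Theorem splits each matching into $4$ induced/strong matchings, for $4(\Delta+1)$ colors in total. The only cosmetic difference is that you make explicit the auxiliary conflict graph $H_i$ and the multi-edge/loop caveat, which the paper leaves implicit.
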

The proof of Theorem~\ref{thm:planar} is short and simple, so we present it here also.
\begin{proof}
	By Vizing's theorem~\cite{Viz64} every graph is $(\Delta + 1)$-edge colorable. Moreover, if $\Delta \ge 7$, $\Delta$ colors
	suffice~\cite{Zha00}. Let $M_i$ be the set of the edges colored by the same color. Let $G(M_i)$ be the graph obtained from $G$ where 
	every edge from $M_i$ is contracted. Note that the vertices corresponding to the edges of $M_i$ that are incident to a common edge 
	are adjacent in $G(M_i)$. Since $G(M_i)$ is planar, we can color the vertices with $4$ colors by the Four Color Theorem, and therefore
	all the edges of $M_i$ with a common edge receive distinct colors in $G$. After coloring each of the $k$ graphs $G(M_i)$, for
	$i \in \set{1,2,\dots,k}$ and $k$ being the chromatic index of $G$, we obtain a strong edge coloring of $G$.
\end{proof}

Notice that if a planar graph has girth at least $7$, the bound is decreased to $3 \chi'(G)$, due to Gr\"{o}tzsch's theorem~\cite{Gro59}.
Moreover, Kronk, Radlowski, and Franen~\cite{KroRadFra74} showed that if a planar graph has maximum degree $\Delta$ at least $4$ and girth at least $5$,
its chromatic index equals $\Delta$. This fact, combined with our result in Theorem~\ref{thm:subcubic} gives us the following.
\begin{theorem}
	Let $G$ be a planar graph with girth at least $7$ and maximum degree $\Delta$. Then,
	$$
		\chi_s'(G) \le 3\,\Delta\,.
	$$
\end{theorem}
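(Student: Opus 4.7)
The plan is to assemble the three ingredients already isolated in the paragraph preceding the theorem statement. For a planar graph $G$ with girth at least $7$ and maximum degree $\Delta\ge 4$, the result of Kronk, Radlowski and Franen gives $\chi'(G)=\Delta$, and a refinement of the proof of Theorem~\ref{thm:planar} replacing the Four Color Theorem by Gr\"otzsch's theorem yields $\chi_s'(G)\le 3\chi'(G)=3\Delta$; the small cases $\Delta\le 3$ are handled by Theorem~\ref{thm:subcubic} and a direct argument.

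First I would dispose of $\Delta\le 3$. The cases $\Delta\le 2$ are routine, as $G$ is then a disjoint union of paths and of cycles of length at least $7$, each of which admits a strong edge coloring with at most $5\le 3\Delta$ colors. For $\Delta=3$ the graph is a subcubic planar graph of girth at least $7>6$, so Theorem~\ref{thm:subcubic} yields $\chi_s'(G)\le 9=3\Delta$.

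For $\Delta\ge 4$ I would follow the proof of Theorem~\ref{thm:planar} verbatim with one twist. Let $M_1,\ldots,M_\Delta$ be the color classes of a proper $\Delta$-edge coloring of $G$, which exists by the theorem of Kronk, Radlowski and Franen. For each $i$ form the planar graph $G(M_i)$ as in that proof; a proper vertex coloring of $G(M_i)$ with $k$ colors refines $M_i$ into $k$ sub-classes no two of whose edges lie at distance at most two in $G$, so the conclusion $\chi_s'(G)\le 3\Delta$ reduces to finding, for every $i$, a proper $3$-coloring of $G(M_i)$.

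The only non-routine step is to verify that every $G(M_i)$ is triangle-free, for then Gr\"otzsch's theorem supplies the desired $3$-coloring. A hypothetical triangle in $G(M_i)$ would lift to a closed walk in $G$ built from its three connecting edges, all of which lie in $G\setminus M_i$, together with zero, one, two or three of the matching edges at the contracted corners, depending on whether the two connecting edges incident to a given contracted corner meet the corresponding matching edge at the same or at distinct endpoints. A short case analysis on the type of each corner (original vertex or contracted matching edge) and on the sharing pattern at each contracted corner shows that such a closed walk always contains a cycle of length at most $6$ in $G$, contradicting the girth hypothesis. I expect this triangle-freeness verification to be the main, though modest, obstacle of the proof.
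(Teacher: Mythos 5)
Your proposal is correct and follows exactly the route the paper sketches in the paragraph preceding the theorem: Kronk--Radlowski--Franen for $\chi'(G)=\Delta$ when $\Delta\ge 4$, the contraction argument of Theorem~\ref{thm:planar} with Gr\"otzsch's theorem in place of the Four Color Theorem, and Theorem~\ref{thm:subcubic} for the subcubic case. The triangle-freeness check you flag does go through as you expect (a triangle in $G(M_i)$ lifts to a closed walk of length at most $6$ in $G$ traversing each of the three connecting edges exactly once, hence containing a cycle of length at most $6$), and you supply this detail, which the paper leaves implicit.
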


Recently, Hocquard and Valicov~\cite{HocVal11} considered graphs with bounded maximum average degree. As a corollary they obtained
the following results regarding planar graphs.
\begin{theorem}[Hocquard, Valicov]
	\label{thm:hocval}
	Let $G$ be a planar subcubic graph with girth $g$. Then,
	\begin{itemize}
		\item[$(i)$] if $g \ge 30$, then $\chi_s'(G) \le 6$;
		\item[$(ii)$] if $g \ge 11$, then $\chi_s'(G) \le 7$;
		\item[$(iii)$] if $g \ge 9$, then $\chi_s'(G) \le 8$;
		\item[$(iv)$] if $g \ge 8$, then $\chi_s'(G) \le 9$.
	\end{itemize}
\end{theorem}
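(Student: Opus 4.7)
The four planar statements in the theorem are corollaries of analogous bounds for subcubic graphs of bounded maximum average degree. My plan is therefore to prove, for each pair $(g_i, k_i)$ in the theorem, the intermediate lemma: \emph{every subcubic graph $G$ with $\mathrm{mad}(G) < 2g_i/(g_i - 2)$ satisfies $\chi_s'(G) \le k_i$}. The four items follow immediately from the standard Euler-formula fact that $\mathrm{mad}(G) < 2g/(g-2)$ for every planar graph of girth $g$. So fix a pair $(g,k)$ from the theorem, set $c = 2g/(g-2)$, and argue by minimum counterexample: let $H$ be a subcubic graph with $\mathrm{mad}(H) < c$ and $\chi_s'(H) > k$, with as few edges as possible.

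The proof then follows the standard reducibility-plus-discharging template. For the reducibility step, I would compile a list of local configurations — short paths of degree-$2$ vertices, $3$-vertices whose $2$-neighbourhood is light, pairs of adjacent $3$-vertices each carrying a pendant, and similar — and show that none of them occurs in $H$. In each case the template is: delete a carefully chosen set $F$ of edges from $H$, invoke minimality to obtain a strong $k$-edge-coloring of $H - F$, and then extend this coloring to $F$. Since in a subcubic graph every edge has at most twelve other edges at distance at most $2$, and far fewer when the local degrees are small, the extension either succeeds greedily or can be carried out by a small Hall-type matching argument on the lists of available colors at the edges of $F$. For the discharging step, assign the initial charge $\mu(v) = d(v) - c$ to each vertex, so that $\sum_v \mu(v) < 0$. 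Then design rules that move charge from $3$-vertices (positive initial charge) to $1$- and $2$-vertices (negative initial charge) along edges, and verify, using the absence of every reducible configuration, that every vertex ends with nonnegative charge, contradicting the total being strictly negative.

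The principal obstacle is that each of the four pairs $(g_i, k_i)$ has to be tuned separately: the larger the color budget $k_i$, the weaker the mad threshold $c_i$ one must control, and the longer the list of forbidden configurations one must exclude while the discharging slack simultaneously shrinks. I expect the hardest case to be item $(iv)$, $g \ge 8$ with $k = 9$, because $c = 8/3$ is very close to the average degree of a cubic graph so the discharging has almost no room, and because even though the color budget $9$ is the largest of the four, it is tight enough that the extension steps for the reducible configurations cannot be purely greedy and must rely on delicate color-list matching arguments spread over several vertices. Getting the list of reducible configurations rich enough to carry the discharging for $c = 8/3$, yet local enough that each extension succeeds with only $9$ colors, is the crux of the proof.
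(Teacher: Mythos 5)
This statement is not proved in the paper at all: it is quoted as a known result of Hocquard and Valicov, obtained in their own work as a corollary of bounds for subcubic graphs with bounded maximum average degree. Your reduction is exactly the route they take --- prove $\mathrm{mad}$-threshold versions and then apply the standard inequality $\mathrm{mad}(G) < 2g/(g-2)$ for planar graphs of girth $g$ --- so your framing is the right one. (One small caveat: the thresholds actually proved in the literature are not exactly $2g/(g-2)$ for the four listed girths, e.g.\ $\chi_s'\le 9$ is obtained for $\mathrm{mad}<36/13$, which happens to exceed $8/3$; aiming at the exact value $2g/(g-2)$ is sufficient for the corollary but may be harder or easier than what is actually provable.)

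The genuine gap is that beyond this reduction your proposal contains no proof. Everything that constitutes the mathematical content --- the explicit list of reducible configurations, the verification that a strong $k$-edge-coloring of $H-F$ extends to $F$ in each case, the precise discharging rules, and the case analysis showing every vertex ends with nonnegative charge --- is deferred with phrases like ``I would compile a list'' and ``the extension either succeeds greedily or can be carried out by a small Hall-type matching argument.'' That last sentence is precisely where such proofs live or die: with only $9$ colors and up to twelve edges within distance two of a given edge in a subcubic graph, greedy extension frequently fails, and whether a Hall-type argument rescues it depends on the exact configuration, which you have not specified. Likewise, asserting that discharging rules ``can be designed'' so that the reducible configurations cover all deficient vertices is the conclusion of the proof, not a step in it; for $c=8/3$ the slack per $3$-vertex is only $1/3$, and it is far from automatic that a workable rule set exists for any particular configuration list. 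As written, the proposal is a correct description of the proof strategy used in the cited source, but it is a plan rather than a proof, and none of the four items is actually established.
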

Later, these bounds were improved in~\cite{HocMonRasVal13} to the following:
\begin{theorem}[Hocquard et al.]
	\label{thm:hocval}
	Let $G$ be a planar subcubic graph with girth $g$. Then,
	\begin{itemize}
		\item[$(i)$] if $g \ge 14$, then $\chi_s'(G) \le 6$;
		\item[$(ii)$] if $g \ge 10$, then $\chi_s'(G) \le 7$;
		\item[$(iii)$] if $g \ge 8$, then $\chi_s'(G) \le 8$;
		\item[$(iv)$] if $g \ge 7$, then $\chi_s'(G) \le 9$.
	\end{itemize}
\end{theorem}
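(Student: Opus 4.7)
The plan is to use the discharging framework that has become standard for strong edge colouring of sparse planar graphs. For each of the four cases $(i)$--$(iv)$ fix the pair $(g,k)$ given in the statement (so $k\in\{6,7,8,9\}$ is the claimed bound and $g$ the girth hypothesis) and suppose, for contradiction, that $G$ is a counterexample minimising the number of edges. The proofs of the four parts share a common template; only the list of reducible configurations and the discharging rates change.

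The first step is to accumulate a list of \emph{reducible configurations} that cannot appear in $G$. An edge $uv$ of $G$ is surrounded by at most $3(d(u)-1)+3(d(v)-1)$ edges at distance at most two, which is at most $12$ in a subcubic graph; any partial colouring of $G$ minus a small piece therefore forbids at most that many colours on the edges to be recovered. Minimality of $G$ forces the absence of every configuration that permits such a reduction. Standard consequences include $\delta(G)\ge 2$; no two adjacent $2$-vertices; explicit upper bounds on the length of each \emph{thread} (a maximal path of $2$-vertices between two $3$-vertices); and intricate prohibitions when a single $3$-vertex lies on several short threads simultaneously. The smaller $k$ is, the more restrictive the conclusions one can draw, and for part $(i)$ with $k=6$ these conclusions are the tightest.

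The second step is the discharging itself. Assign to each vertex $v$ the initial charge $\mu(v)=d(v)-4$ and to each face $f$ the charge $\mu(f)=\ell(f)-4$; by Euler's formula the total charge is exactly $-8$. Under the girth hypothesis every face starts with charge at least $g-4$, while $2$-vertices and $3$-vertices start at $-2$ and $-1$ respectively. The rules redistribute charge from long faces to incident $2$-vertices and to $3$-vertices bearing short threads, at rates calibrated so that, under the reducibility conclusions of the previous step, every element finishes with nonnegative charge. Summing then contradicts the total of $-8$ and closes the argument.

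The main obstacle is the reducibility analysis, particularly for $(i)$, where $k=6$ is dangerously close to the worst-case number of forbidden colours around a degree-$3$ vertex. To gain the missing slack one must choose an efficient order in which to delete and recolour edges and exploit the fact that a long thread of $2$-vertices admits a periodic $k$-colouring that splices cleanly with the colouring of the rest of $G$. I expect the bulk of the proof to consist of a careful case analysis of the local structure around the uncoloured edge, with discharging used only at the very end to knit the reducibility lemmas together; the bookkeeping of the rules themselves is routine once the forbidden configurations have been established.
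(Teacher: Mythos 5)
There is a fundamental mismatch here: the paper does not prove this theorem at all. It is a quoted result, attributed to Hocquard, Montassier, Raspaud and Valicov and cited from the literature; the present paper only uses it for context. So there is no in-paper proof to compare against, and what you have written cannot stand in for one, because it is a research plan rather than a proof. Every piece of actual mathematical content is deferred: you never exhibit the list of reducible configurations for any of the four cases, never prove that any configuration is in fact reducible (which, as you yourself note, is the hard part --- for $k=6$ the $12$ edges in the $2$-neighbourhood of an edge vastly exceed the number of available colours, so the greedy count gives nothing and one needs genuine recolouring arguments), never state the discharging rules, and never verify that the final charges are nonnegative. The sentence ``the bookkeeping of the rules themselves is routine once the forbidden configurations have been established'' concedes that the load-bearing steps are absent. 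As it stands this is a correct description of the \emph{genre} of proof one would write, not a proof.

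One further point worth knowing: the published argument does not discharge on a planar embedding as you propose. Hocquard et al.\ prove their bounds for subcubic graphs with bounded \emph{maximum average degree}, using vertex-only discharging with threads of $2$-vertices as the reducible structures, and the planar girth statements $(i)$--$(iv)$ then follow immediately from the standard inequality $\mathrm{mad}(G) < 2g/(g-2)$ for planar graphs of girth $g$ (e.g.\ $g \ge 14$ gives $\mathrm{mad} < 7/3$, which is the threshold for $6$ colours). Your face-based charge $\mu(f)=\ell(f)-4$ could in principle be made to work, but it is both a different route from the actual one and strictly harder to execute, since you would have to redo the face analysis for each girth threshold instead of inheriting the result from a single mad-based theorem. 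If you want to reconstruct this result, I would redirect the effort toward the mad formulation and toward actually proving the thread-reducibility lemmas.
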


In this paper we consider planar graphs with girth $6$ and introduce the following results.
\begin{theorem}
	\label{thm:girth6}
	Let $G$ be a planar graph with girth at least $6$ and maximum degree $\Delta \ge 4$. Then,
	$$
		\chi_s'(G) \le 3\,\Delta + 6\,.
	$$
\end{theorem}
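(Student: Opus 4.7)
The plan is to argue by contradiction via the discharging method. Let $G$ be a counterexample to the statement minimizing $|E(G)|$, so $G$ is planar, has girth at least $6$, satisfies $\Delta(G) = \Delta \ge 4$, cannot be strongly edge colored with $3\,\Delta + 6$ colors, while every proper subgraph $G'$ with $\Delta(G') \le \Delta$ can.

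The first step is to extract a family of \emph{reducible configurations} that therefore cannot occur in $G$. The prototype is this: an edge $e = uv$ is reducible whenever the number of edges of $G$ within distance $2$ of $e$ (other than $e$ itself) is at most $3\,\Delta + 5$, since then one colors $G - e$ by minimality and extends. Counting forbidden colors in terms of $d(u),d(v)$ and the degrees of the second neighborhood, one deduces restrictions of the following kind: $G$ has no pendant vertex; threads of consecutive $2$-vertices have bounded length; a $3$-vertex cannot have three $2$-neighbors; a vertex of degree $\Delta$ is adjacent to only a controlled number of $2$- and $3$-vertices. Slightly finer ``recoloring'' arguments (removing an edge together with a few well-chosen incident edges) will likely be needed to sharpen these bounds. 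The precise list must be tuned so the subsequent discharging closes.

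For the discharging phase, assign initial charges $\mu(v) = 2d(v) - 6$ for $v \in V(G)$ and $\mu(f) = \ell(f) - 6$ for $f \in F(G)$. Combining Euler's formula with $\sum_v d(v) = \sum_f \ell(f) = 2|E(G)|$ yields $\sum_v \mu(v) + \sum_f \mu(f) = -12$. The girth assumption gives $\mu(f) \ge 0$ for every face; after pendant vertices are excluded by reducibility, the only vertices with negative charge are those of degree $2$, each carrying $\mu = -2$, while $3$-vertices are neutral and vertices of degree $\ge 4$ positive. One then designs redistribution rules — roughly, each vertex of degree $\ge 4$ sends a fixed amount of charge to each $2$-vertex in its (first or second) neighborhood, and each face of length $\ge 7$ distributes its surplus $\ell(f) - 6$ to incident light vertices — calibrated so that every element ends with non-negative final charge. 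Summing then contradicts the total of $-12$.

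The principal obstacle is the tension between $\Delta$ being possibly very large and the bound $3\,\Delta + 6$ being only linear: a single $\Delta$-vertex could, in principle, be adjacent to many $2$-vertices, so the reducibility lemmas must be sharp enough to bound the ``weight'' such a vertex is forced to support. The most delicate subcase I expect concerns a $2$-vertex whose two neighbors have widely differing degrees — one of degree $\Delta$ and one of low degree — because the naive count of forbidden colors is then largest and the available slack of $6$ extra colors must be exploited cleanly. The girth-$6$ hypothesis is essential here to preclude short cycles that would otherwise spawn many short alternating configurations of light vertices and inflate the second-neighborhood count.
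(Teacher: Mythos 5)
Your proposal is a correct description of the discharging \emph{framework} the paper uses --- same minimal counterexample, same initial charges $2d(v)-6$ and $\ell(f)-6$ summing to $-12$, same prototype of reducibility (an uncolored edge with at most $3\Delta+5$ colored edges in its $2$-neighborhood) --- but it is an outline, not a proof. Every substantive ingredient is deferred: you write that the list of reducible configurations ``must be tuned so the subsequent discharging closes'' and that the rules are ``calibrated so that every element ends with non-negative final charge.'' That calibration is the entire content of the theorem. The paper's actual work consists of five specific lemmas (a $1$-vertex must have a $5^+$-neighbor; a $2$-vertex must have a $5^+$-neighbor; every vertex has a $3^+$-neighbor; a $k$-vertex with $k-1$ $2^-$-neighbors has only ``strong'' $2$-neighbors; a $k$-vertex with $k-2$ $2^-$-neighbors has at least three non-weak ones), a classification of $2$-vertices into weak, semiweak and strong according to the degree profile of their second neighbor, seven discharging rules with carefully chosen amounts ($2$, $\tfrac43$, $1$, $\tfrac23$), and a case analysis over $d(v)$ verifying nonnegativity. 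None of this can be reconstructed from the plan alone.

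One of your structural presumptions is also false and hides the main difficulty. You assert that ``after pendant vertices are excluded by reducibility, the only vertices with negative charge are those of degree $2$.'' Pendant vertices are \emph{not} reducible in general: if a $1$-vertex $v$ hangs from a vertex $u$ of degree $\Delta$ whose other neighbors also have high degree, the $2$-neighborhood of $uv$ contains up to $(\Delta-1)+(\Delta-1)^2$ edges, which is quadratic in $\Delta$ and far exceeds the $3\Delta+5$ budget. The paper can only reduce a pendant edge when $d(u)\le 4$ (giving at most $3\Delta$ neighboring edges); consequently $1$-vertices survive in the minimal counterexample, each carries charge $-4$, and the discharging must pay for them with $2$ from the incident face (exploiting the fact that a pendant vertex lengthens the face's boundary walk by $2$) plus $2$ from the forced $5^+$-neighbor. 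This is precisely the kind of tension between the linear color bound and the potentially quadratic $2$-neighborhood that you correctly flag as ``the principal obstacle,'' but your plan resolves it by assuming it away rather than by an argument.
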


\begin{theorem}
	\label{thm:subcubic}
	Let $G$ be a subcubic planar graph with girth at least $6$. Then,
	$$
		\chi_s'(G) \le 9\,.
	$$
\end{theorem}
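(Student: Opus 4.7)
The plan is to argue by contradiction: take a counterexample $G$ that is minimum in $|V(G)|+|E(G)|$ and derive a contradiction by combining reducible-configuration analysis with a discharging argument on the planar embedding of $G$.

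First I would establish basic structural properties of $G$. Pendant edges and sufficiently long pendant paths of degree-$2$ vertices are immediately reducible: delete the hanging piece, colour the rest by minimality, and extend greedily, since in a subcubic graph an edge $uv$ has at most $2(d(u)+d(v)-2)\le 8$ other edges within distance two, a count that drops further near degree-$1$ or degree-$2$ endpoints. I would then push the reducibility list further, ruling out short $2$-alternating paths, certain adjacent low-degree pairs, and, most importantly, $6$-faces that contain too many degree-$2$ vertices or too many ``weakly surrounded'' degree-$3$ vertices. Each such reducible configuration would be handled by deleting or contracting a small piece of $G$, invoking minimality on the smaller subcubic planar graph of girth $\ge 6$, and extending the colouring on the removed edges by a direct count of forbidden colours or by a Vizing-style alternating-chain recolouring.

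For the discharging step I would use the standard charges $c(v)=2d(v)-6$ and $c(f)=d(f)-6$, which by Euler's formula and $\sum d(v)=\sum d(f)=2|E|$ sum to $-12$. Since $G$ is subcubic and has girth at least $6$, only vertices of degree $\le 2$ start with negative charge, $6$-faces are neutral, and faces of length at least $7$ are positive. The rules would then move charge from $({\ge}7)$-faces to deficient vertices, either directly along incident edges or indirectly through an adjacent $6$-face acting as a conduit. The absence of the reducible configurations from the previous step should force every vertex and every face to end with nonnegative charge, contradicting the total $-12$.

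The main obstacle I anticipate is precisely the $6$-face: being neutral it cannot, by itself, absorb the deficit of an incident degree-$2$ vertex, and Euler does not forbid $G$ from having many of them. Consequently, the list of reducible configurations must be strong enough to guarantee that no degree-$2$ vertex is entirely surrounded by $6$-faces, and more generally that from any negatively charged vertex one can reach a long face along a short, controllable path. Finding such a list -- small enough that each configuration is reducible by a tractable colour-extension case analysis, yet rich enough to support the discharging -- is the delicate combinatorial heart of the proof.
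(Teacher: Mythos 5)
Your overall framework coincides with the paper's: minimal counterexample, the charges $\mathrm{ch}_0(v)=2d(v)-6$, $\mathrm{ch}_0(f)=l(f)-6$ summing to $-12$, and discharging from long faces to degree-$2$ vertices. But there are two genuine problems. First, your basic count is wrong: a subcubic edge $uv$ with $d(u)=d(v)=3$ has up to $(d(u)-1)+(d(v)-1)=4$ edges at distance one and up to $8$ more at distance two, i.e.\ up to $12$ edges in its $2$-neighborhood, not $2(d(u)+d(v)-2)\le 8$. If $8$ were correct, every subcubic graph could be greedily strong-edge-colored with $9$ colors and the theorem (indeed all of Problem~\ref{prob:cubic}) would be trivial. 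The count is harmless for pendant edges and for edges near $2$-vertices, which is all you use it for, but it signals an underestimate of how hard the extension steps are.

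Second, and more importantly, the step you explicitly defer --- ``finding such a list \dots is the delicate combinatorial heart of the proof'' --- is precisely where all the work lies, and it cannot be dispatched by a direct count of forbidden colors. The paper's resolution of the $6$-face obstacle is to prove that \emph{no} $6$-face is incident to \emph{any} $2$-vertex (Lemma~\ref{lm:six}), together with the facts that $2$-vertices have only $3$-neighbors, that $7$-faces carry at most one $2$-vertex (Lemma~\ref{lm:seven}), and that $2$-vertices are pairwise at distance at least $4$; then a single rule (each face sends $1$ to each incident $2$-vertex) closes the discharging with no ``conduit'' rules at all. The proof of Lemma~\ref{lm:six} is not a greedy extension: after deleting the $2$-vertex and coloring $G-v_0$, the last edge $v_0v_1$ can see nine distinctly colored edges, and one must run a multi-case recoloring argument (swapping colors around the face in three separate configurations) to free a color. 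Your proposal names the right obstacle and the right shape of the answer, but it leaves this reducibility argument --- the actual content of the theorem --- unproved, and your weaker target (``no $2$-vertex entirely surrounded by $6$-faces'') would still leave a $2$-vertex incident to one $6$-face and one $7$-face with a deficit of $1$ that your stated rules do not cover.
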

Theorem~\ref{thm:subcubic} partially solves the third item of Problem~\ref{prob:cubic}. The proposed bound, if true, is realized by
the complement of $C_6$ (see Fig.~\ref{fig:cubplan}).
\begin{figure}[htb]
	$$
		\includegraphics{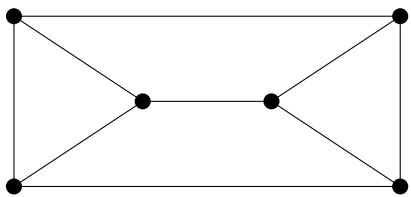}
	$$
	\caption{Subcubic planar graph with the strong chromatic index equal to $9$.}
	\label{fig:cubplan}
\end{figure}
Here, let us remark that very recently Hocquard et al.~\cite{HocMonRasVal13} obtained an improved result of Theorem~\ref{thm:subcubic},
proving that every subcubic planar graph without cycles of length $4$ and $5$ admits a strong edge coloring with at most $9$ colors.

\paragraph{}
All the graphs considered in the paper are simple. We say that a vertex of degree $k$, at least $k$, and at most $k$ is a \textit{$k$-vertex},
a \textit{$k^+$-vertex}, and a \textit{$k^-$-vertex}, respectively. Similarly, a \textit{$k$-neighbor}, a \textit{$k^+$-neighbor},
and a \textit{$k^-$-neighbor} of a vertex $v$ is a neighbor of $v$ of degree $k$, at least $k^+$, and at most $k$, respectively.
A \textit{$2$-neighborhood} of an edge $e$ is comprised of the edges at distance at most two from $e$. Here, the distance between the edges $e$ and $e'$
in a graph $G$ is defined as the distance between the vertices corresponding to $e$ and $e'$ in the line graph $L(G)$.

\section{Proof of Theorem~\ref{thm:girth6}}
\label{sec:girth}

We prove the theorem using the discharging method. First, we list some structural properties of a minimal counterexample $G$
to the theorem and then, we show that a planar graph with such properties cannot exist.

\subsection{Structure of minimal counterexample}

Since $G$ is a minimal counterexample, a graph obtained from
$G$ by removing any edge or vertex has some strong $\num$-edge coloring $\sigma$. In every proof we show
that $\sigma$ can be extended to $G$, establishing a contradiction. We note here that in the proofs
by removing an edge from a graph and adding it back after coloring, we do not decrease the distance
between the edges of the same color such that the distance between them would be less than $3$.

A $4$-vertex is a \textit{$4_2$-vertex} if it has at most two $2$-neighbors and a \textit{$4_3$-vertex} if it has three $2$-neighbors. 
We call a $2$-vertex \textit{weak} if it has a $3^-$-neighbor, 
\textit{semiweak} if it has a $4_3$-neighbor, and \textit{strong} otherwise.

First, we show that vertices of degree $1$ have neighbors of degree at least $5$.
\begin{lemma}
	\label{lm:onevertex}
	Every $1$-vertex in $G$ is adjacent to a $5^+$-vertex. Moreover, if it is adjacent to a $5$-vertex $u$,
	all the other neighbors of $u$ have degree at least $3$.
\end{lemma}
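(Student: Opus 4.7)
My plan is a standard minimality argument: remove the $1$-vertex $v$ from $G$, invoke the strong $\num$-edge coloring $\sigma$ of $G-v$ guaranteed by the choice of $G$, and show that the pendant edge $e=uv$ can always be assigned a colour not appearing in its $2$-neighbourhood.

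Let $u$ be the neighbour of $v$, put $d=\deg(u)$, and let $u_1,\dots,u_{d-1}$ denote the neighbours of $u$ other than $v$, with $d_i=\deg(u_i)$. Since $v$ has no other neighbours, every edge at distance at most two from $e$ is either one of the $d-1$ edges $uu_i$ or one of the $d_i-1$ further edges at some $u_i$; a short count shows these two kinds of edges are disjoint, because a common edge $u_iu_j$ would create a triangle $uu_iu_j$, which is impossible in a graph of girth~$6$. Hence $\sigma$ blocks at most $\sum_{i=1}^{d-1} d_i$ colours for $e$.

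For the first part, if $d\le 4$ this sum is at most $3\Delta<3\Delta+6$, so a free colour remains and $\sigma$ extends to $G$, contradicting minimality; therefore $d\ge 5$. For the second part, assume $d=5$ and that some non-$v$ neighbour $u_j$ of $u$ satisfies $d_j\le 2$. Then one term of the sum drops to at most $2$ while the remaining three are at most $\Delta$, yielding the bound $3\Delta+2<3\Delta+6$; again $\sigma$ extends, which is a contradiction, so every other neighbour of $u$ must have degree at least $3$.

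I do not anticipate any genuine obstacle here: the arithmetic is transparent in both parts, and the remark from the paper about preserving distances upon reinsertion is automatic because $v$ is pendant, so deleting $v$ and its unique incident edge cannot create any new short path in the line graph between edges that survive in $G-v$. The only place where the structural hypothesis really matters is the girth~$6$ condition, which is invoked precisely to guarantee that the two families of forbidden edges counted above are disjoint and hence that the crude upper bound on the number of blocked colours is valid.
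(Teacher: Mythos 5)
Your proposal is correct and follows essentially the same minimality-plus-counting argument as the paper: delete the pendant vertex, bound the number of edges in the $2$-neighbourhood of $uv$ by $3\Delta$ (resp.\ $3\Delta+2$), and extend the colouring. The only cosmetic difference is your appeal to the girth condition for disjointness of the counted edge families, which is not actually needed since any over-counting would only strengthen the upper bound on blocked colours.
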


\begin{proof}
	Let $v$ be a $1$-vertex with a $4^-$-neighbor $u$. Let $\sigma$ be a strong edge coloring of $G - v$.
	The number of colored edges in the $2$-neighborhood of the edge $uv$ is at most $3\,\Delta$, hence there
	are at least $6$ colors available for $uv$, and so $\sigma$ can be extended to $G$, a contradiction.
	
	If $v$ is adjacent to a $5$-vertex $u$ with another $2^-$-neighbor, the number of colored edges in the 
	$2$-neighborhood of the edge $uv$ is at most $3\Delta + 2$, and so $\sigma$ can be extended again, since
	$uv$ has at least $4$ available colors.
\end{proof}

Next, in the minimimal counterexample $G$ every $2$-vertex is adjacent to some vertex of degree at least $5$.
\begin{lemma}
	\label{lm:twovertex}
	Every $2$-vertex has at least one $5^+$-neighbor.
\end{lemma}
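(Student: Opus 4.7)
The plan is to argue by contradiction: suppose $G$ contains a $2$-vertex $v$ whose two neighbors $u_1$ and $u_2$ both have degree at most $4$. I would delete $v$ from $G$ and invoke the minimality of the counterexample on the smaller graph $G-v$, obtaining a strong $\num$-edge coloring $\sigma$ of $G-v$. The goal is then to extend $\sigma$ to $G$ by assigning colors to the two uncolored edges $e_1=vu_1$ and $e_2=vu_2$.

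The core step is to count the colored edges in the $2$-neighborhood of each $e_i$. For $e_1$, these split into four groups: (a) the other edges at $u_1$, contributing $d(u_1)-1$; (b) the edge $e_2$ at $v$, which is \emph{uncolored} and contributes nothing; (c) the edges at the neighbors of $u_1$ other than $v$, contributing at most $(d(u_1)-1)(\Delta-1)$; and (d) the edges at $u_2$ other than $e_2$, contributing $d(u_2)-1$. Summing and using $d(u_1), d(u_2)\le 4$ gives a bound of
\[
(d(u_1)-1)\Delta + (d(u_2)-1) \;\le\; 3\Delta+3,
\]
so at least $3$ colors remain available for $e_1$, and a symmetric count yields at least $3$ free colors for $e_2$. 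Because $e_1$ and $e_2$ share $v$ they must be colored differently; selecting any free color for $e_1$ and then one of the at least two remaining free colors for $e_2$ accomplishes this, producing a strong $\num$-edge coloring of $G$ and the desired contradiction.

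The step I view as the main obstacle is verifying that the four families above are genuinely disjoint and that family (c) does not double-count any edge; any such overlap would save forbidden colors in the wrong direction and could spoil the margin. Each conceivable coincidence, however, corresponds to a short cycle through $v$: an edge inside $N(u_1)\setminus\{v\}$ would form a triangle at $u_1$; an edge $u_1u_2$ would form the triangle $u_1vu_2$; and an edge joining some $w\in N(u_1)\setminus\{v\}$ to $u_2$ would close the $4$-cycle $u_1wu_2v$. All of these are excluded by the girth hypothesis $g(G)\ge 6$, so the bound $3\Delta+3$ is honest and the argument closes.
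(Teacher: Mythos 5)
Your proof is correct and follows essentially the same route as the paper: delete $v$, bound the colored edges in the $2$-neighborhood of each of $vu_1$, $vu_2$ by $3\Delta+3$, and color both edges greedily from the at least $3$ remaining colors. Your worry about overlaps among the four families is actually harmless in the other direction—any coincidence only reduces the number of distinct forbidden colors, so the upper bound $3\Delta+3$ stands even without invoking the girth condition.
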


\begin{proof}
	Suppose that $v$ is a $2$-vertex with two $4^-$-neighbors $u$ and $w$. Let $\sigma$ be the strong $\num$-edge
	coloring of $G - v$. Each of the two noncolored edges $uv$, $vw$ in $G$ has at most $3\,\Delta + 3$ colored 
	edges in the $2$-neighborhood, hence there are at least $3$ available colors for each, which means that both can
	be easily colored.
\end{proof}

The following lemma shows that every vertex of $G$ has at least one neighbor of degree at least $3$.
\begin{lemma}
	\label{lm:bigneighbor}
	Every vertex in $G$ has at least one $3^+$-neighbor.
\end{lemma}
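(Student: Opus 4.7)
The plan is to argue by contradiction on $\deg(v)$. Suppose $v\in V(G)$ has only $2^-$-neighbors. If $\deg(v)=1$, the unique neighbor of $v$ is a $2^-$-vertex, contradicting Lemma~\ref{lm:onevertex}; if $\deg(v)=2$, both neighbors are $2^-$-vertices, contradicting Lemma~\ref{lm:twovertex}. Hence it remains to rule out the main case $d:=\deg(v)\ge 3$.

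For this case I would delete $v$, invoke minimality to obtain a strong $\num$-edge coloring $\sigma$ of $G-v$, and then greedily extend $\sigma$ across the $d$ edges $e_1=vu_1,\dots,e_d=vu_d$. For $e_i=vu_i$ I would bound the number of forbidden colors at the moment it is colored by four contributions: (a) the already colored edges at $v$, at most $d-1$; (b) the other edge at $u_i$ (present only if $u_i$ is a $2$-vertex), at most $1$; (c) the edges at the second neighbor $w_i$ of $u_i$ (again, only if $u_i$ is a $2$-vertex), at most $\Delta-1$; and (d) the edges $u_jw_j$ with $j\ne i$ for the $2$-neighbors $u_j$ of $v$, at most $d-1$. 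The worst-case total is $(d-1)+1+(\Delta-1)+(d-1)=2d+\Delta-2\le 3\Delta-2$, leaving at least $8$ colors available at every step, so $\sigma$ extends to a strong $\num$-edge coloring of $G$ --- contradicting the choice of $G$.

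The main obstacle is more bookkeeping than genuine difficulty: I must verify that the four lists above really do exhaust the $2$-neighborhood of $e_i$ in the current partial coloring, and that no edge is counted twice. This is where the girth-$6$ hypothesis is used, since the identification $w_i=u_j$ with $j\ne i$ would close a triangle through $v$, while $w_i=w_j$ with $j\ne i$ would close a $4$-cycle through $v$; both are forbidden. Once these checks are in place, the slack of $8$ unused colors is far more than is actually needed, so the argument goes through comfortably.
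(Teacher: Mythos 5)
Your proposal is correct and follows essentially the same route as the paper: delete $v$, take a strong $\num$-edge coloring of $G-v$ by minimality, and greedily color the $d$ edges at $v$, each of which sees at most $2d+\Delta-2\le 3\Delta-2$ colored edges in its $2$-neighborhood. Your bookkeeping is in fact slightly more careful than the paper's (which states the looser count ``at least $2\Delta-k+7$ available colors''), and note that the girth hypothesis is not even needed here---possible coincidences among the $w_i$ and $u_j$ could only reduce the number of forbidden colors.
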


\begin{proof}
	Suppose, to the contrary, that $v$ is a $k$-vertex of $G$ having only $2^-$-neighbors. Then, by the
	minimality of $G$, we have a strong $\num$-coloring $\sigma$ of $G - v$. By applying $\sigma$ to $G$, only the edges
	incident to $v$ remain noncolored. As each of these edges has at least $2\Delta - k + 7$ available colors,
	we can color them greedily one by one, and so extend $\sigma$ to $G$.
\end{proof}

Now, we prove that if a $k$-vertex of $G$ has $k-1$ $2^-$-neighbors, all of them are strong.
\begin{lemma}
	\label{lm:onlystrong}
	Every $k$-vertex, $k \ge 5$, with $k-1$ $2^-$-neighbors has only strong $2$-neighbors.
\end{lemma}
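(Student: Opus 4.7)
The plan is to argue by contradiction: assume $u$ is a $k$-vertex with $k\ge 5$ and $k-1$ $2^-$-neighbors, and that some $2$-neighbor $v$ of $u$ is not strong. Write $w$ for the other neighbor of $v$. Since $v$ is weak or semiweak, either $\deg(w)\le 3$ or $w$ is a $4_3$-vertex of degree $4$; in either case $\deg(w)\le 4$. Lemma~\ref{lm:bigneighbor} forces $u$ to have at least one $3^+$-neighbor, so among the $k$ neighbors of $u$ exactly one, call it $u'$, has degree $\ge 3$, and the remaining $k-2$ neighbors of $u$ other than $v$ all have degree at most $2$.

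I would then remove the edge $uv$ and invoke the minimality of $G$ to obtain a strong $(3\Delta+6)$-edge coloring $\sigma$ of $G-uv$. It suffices to produce one color available at $uv$. Counting the distinct colored edges in the $2$-neighborhood of $uv$ gives the bound
$$
(k-1) + 1 + (\deg(u')-1) + (k-2) + (\deg(w)-1) \;\le\; 2k + \Delta + \deg(w) - 4,
$$
where the five summands account respectively for the edges at $u$ other than $uv$, the edge $vw$, the edges at the big neighbor $u'$ other than $uu'$, at most one edge at each of the $k-2$ remaining $2^-$-neighbors of $u$, and the edges at $w$ other than $vw$. Using $k\le\Delta$ together with $\deg(w)\le 4$, the right-hand side is at most $3\Delta$, so $uv$ has at least $6$ available colors. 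Assigning one of them to $uv$ extends $\sigma$ to a strong $(3\Delta+6)$-edge coloring of $G$, contradicting the choice of $G$.

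The step I would check most carefully is that the five contributions above really do count distinct edges, i.e., that no edge is double-counted. This is where the girth hypothesis enters: girth at least $6$ forbids triangles and $4$-cycles, which in turn rules out $w$ being a neighbor of $u$ (no triangle $uvw$), two distinct neighbors of $u$ being adjacent, and any neighbor of $u$ distinct from $v$ being adjacent to $w$ (no $4$-cycle through $u$, $x$, $w$, $v$). Once these disjointness checks are made, the rest of the argument is a single degree count.
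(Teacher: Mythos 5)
Your degree count for $uv$ is accurate and your closing disjointness checks (girth $\ge 6$ rules out $w$ being adjacent to $u$ or to any other neighbor of $u$) are exactly the right ones, but the proof has a soundness gap at its very first step: deleting only the edge $uv$ and retaining the coloring $\sigma$ of $G-uv$ on all other edges is not legitimate. A strong edge coloring of $G-uv$ only guarantees distinct colors on edges at distance at most $2$ \emph{in $G-uv$}; reinserting $uv$ can decrease distances. Concretely, $vw$ and any edge $ux$ at $u$ are at distance $2$ in $G$ (the edge $uv$ joins the pair $\{u,x\}$ to $\{v,w\}$), yet your own girth analysis shows that in $G-uv$ there is no edge joining these pairs, so there $vw$ and $ux$ are at distance at least $3$ and $\sigma$ may give them the same color. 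Such a conflict cannot be repaired by any choice of color for $uv$, so exhibiting six free colors for $uv$ does not complete the extension. The authors' blanket remark that edge deletion "does not decrease the distance below $3$" is precisely what fails in this configuration, so it cannot be invoked here.

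The repair is small and is in fact what the paper does: delete the $2$-vertex $v$ itself (equivalently, uncolor both $uv$ and $vw$). Vertex deletion never creates new conflicts among the surviving edges, since two surviving edges joined only through the deleted vertex cannot lie at distance $\le 2$ in $G$ unless they already share an endpoint. One then colors both edges greedily: $vw$ has at most $2\Delta+k\le 3\Delta$ colored edges in its $2$-neighborhood (so at least $6$ free colors), and $uv$ has at most $\Delta+2(k-2)+3\le 3\Delta-1$ (your count, with the now-uncolored $vw$ removed from it). So your argument for $uv$ is essentially the paper's; what is missing is the companion count for $vw$ and the switch from edge deletion to vertex deletion.
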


\begin{proof}
	Suppose that $v$ is a $k$-vertex, $k \ge 5$, with one $3^+$-neighbor and at least one $2$-neighbor $u$
	which is not strong and has another neighbor $w$. Since $u$ is not strong, $w$ is either a $4_3$-vertex or a $3^-$-vertex.
	Let $\sigma$ be a strong $\num$-edge coloring of $G - u$. The edge $uv$ has
	at most $\Delta + 2(k-2) + 3 \le 3\,\Delta - 1$ colored edges in the $2$-neighborhood, while the number of edges 
	in the $2$-neighborhood of $uw$ is at most $2\,\Delta + k \le 3\,\Delta$. Hence, both edges can be colored.
\end{proof}

Finally, we show that every $k$-vertex of $G$ with $k-2$ $2^-$-neighbors has at least three $2$-neighbors that are not weak.
\begin{lemma}
	\label{lm:weakstrong}
	Every $k$-vertex, $k \ge 5$, with $k-2$ $2^-$-neighbors has at least three nonweak $2$-neighbors.
\end{lemma}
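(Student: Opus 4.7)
We argue by contradiction in the style of Lemmas~\ref{lm:onlystrong} and~\ref{lm:twovertex}. Suppose $v$ is a $k$-vertex, $k\ge 5$, with $k-2$ $2^-$-neighbors but at most two nonweak $2$-neighbors. Let $a$ be the number of $1$-neighbors and $b=k-2-a$ the number of $2$-neighbors of $v$; the proof splits on the value of $b$.

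When $b\le 2$ we have $a\ge k-4\ge 1$. For $k=5$ the presence of a $1$-neighbor together with a remaining $2^-$-neighbor of $v$ (which has degree at most $2<3$) contradicts the moreover-clause of Lemma~\ref{lm:onevertex}. For $k\ge 6$, we would delete any $1$-neighbor $y$ of $v$, take a strong $\num$-edge coloring $\sigma$ of $G-y$ given by minimality, and color $vy$: the forbidden colors number at most $(k-1)+b+2(\Delta-1)\le k+2\Delta-1$, leaving at least $\Delta+7-k\ge 7$ choices.

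The main case is $b\ge 3$. Then $v$ has $t:=b-2\ge 1$ weak $2$-neighbors $u_1,\dots,u_t$ with outer endpoints $w_1,\dots,w_t$ of degree at most $3$. A short girth-$6$ analysis confirms that the $w_i$ are pairwise distinct (and distinct from $v$ and from the $u_j$'s), and moreover that the edges $u_iw_i$ and $u_jw_j$ lie at line-graph distance at least three whenever $i\ne j$. We would delete $u_1,\dots,u_t$ simultaneously, take a strong $\num$-edge coloring $\sigma$ of the remainder, and extend $\sigma$ to $G$ in two rounds: first color the edges $u_iv$ one after another, then the edges $u_iw_i$.

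The key quantitative point is that throughout the first round every $u_jw_j$ is still uncolored, so a removed $2$-neighbor $u_j$ contributes no colored distance-$2$ edge at $v$. This keeps the colored part of $N_2(u_iv)$ bounded by roughly $k-b+i+2\Delta+3$; since $b\le k-2$ and $k\le\Delta$, at least five colors remain available for each $u_iv$. In the second round the distance-$3$ property decouples the $u_iw_i$'s from one another, and a direct count bounds the forbidden colors for each by $k+2\Delta$, leaving at least $\Delta+6-k\ge 6$ choices. Hence $\sigma$ extends to $G$, contradicting minimality. The main obstacle the two-round strategy overcomes is that a single-vertex deletion in the style of Lemma~\ref{lm:onlystrong} does not leave enough colors when $k$ is close to $\Delta$; removing all weak $2$-neighbors at once and exploiting that their pendants remain uncolored is what makes the count tight.
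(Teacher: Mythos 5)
Your main case ($b\ge 3$) is exactly the paper's argument: delete all weak $2$-neighbors simultaneously, color the spoke edges $vu_i$ first (exploiting that the pendant edges $u_iw_i$ are still uncolored, which keeps the count near $2\Delta+k+1\le 3\Delta+1$), then color the pendants using the girth-$6$ separation between them; your counts agree with (and are slightly more careful than) the paper's. The only divergence is that you also dispose of the degenerate subcases in which some of the $k-2$ $2^-$-neighbors are $1$-vertices, a point the paper's proof passes over silently — welcome extra care, but not a different method.
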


\begin{proof}
	Let $v$ be a $k$-vertex, $k \ge 5$, with two $3^+$-neighbors. Moreover, suppose that $v$ is adjacent to 
	exactly two $2$-neighbors that are not weak. Let $u_1, u_2,\dots,u_{k-4}$ be the weak neighbors of $v$, and 
	$w_1,w_2,\dots,w_{k-4}$ their neighbors distinct from $v$. Let $\sigma$ be the strong $\num$-edge coloring
	of $G - \set{u_1,u_2,\dots,u_{k-4}}$. We extend $\sigma$ to $G$ in the following way. First, we color
	the edges $vu_1, vu_2,\dots, vu_{k-4}$ one by one. Such a coloring is possible since the number of
	colored edges in the $2$-neighborhood of such an edge never exceeds $3\,\Delta - 1$. Then, we color the 
	edges $u_iw_i$, $i \in \set{1,2,\dots, k-4}$. Again, every edge has at most $3\,\Delta$ colored edges 
	in the $2$-neighborhood, hence there is a free color by which we color it. This establishes the lemma.
\end{proof}

\subsection{Discharging}

Now, we show that a minimal counterexample $G$ with the described properties does not exist. In order to prove
this, we set the charges to all vertices and faces in such a way that the sum of all charges is negative.
Then, we redistribute charges among the vertices and faces so that every single object has non-negative charge,
clearly obtaining a contradiction on the existence of $G$.

The initial charge of vertices and faces is set as follows:
\begin{eqnarray*}
	\textrm{ch}_0(v) &=& 2\,d(v) - 6,\quad v \in V(G);\\
	\textrm{ch}_0(f) &=& l(f) - 6,\quad f \in F(G).
\end{eqnarray*}
By Euler's formula, it is easy to compute that the sum of all charges is $-12$.
We redistribute the charge among the vertices and faces by the following discharging rules:
\begin{itemize}
	\item[\textbf{(R1)}] Every face sends $2$ to every incident $1$-vertex.
	
	\item[\textbf{(R2)}] Every $5^+$-vertex sends $2$ to every adjacent $1$-vertex.
	
	\item[\textbf{(R3)}] Every $5^+$-vertex sends $2$ to every adjacent weak $2$-vertex.
	
	\item[\textbf{(R4)}] Every $5^+$-vertex sends $\frac{4}{3}$ to every adjacent semiweak $2$-vertex.
	
	\item[\textbf{(R5)}] Every $5^+$-vertex sends $1$ to every adjacent strong $2$-vertex.
	
	\item[\textbf{(R6)}] Every $4_2$-vertex sends $1$ to each of the adjacent $2$-vertices.
	
	\item[\textbf{(R7)}] Every $4_3$-vertex sends $\frac{2}{3}$ to each of the three adjacent $2$-vertices.
\end{itemize}

\medskip
Now, we are ready to prove Theorem~\ref{thm:girth6}.
\begin{proof}
	Suppose, to the contrary, that $G$ is a minimal counterexample to the theorem. We use the structural properties of $G$
	to show that after applying the discharging rules the charge of all vertices and faces is nonnegative. 
	
	First, consider the faces of $G$. It is easy to see that since $G$ has girth at least $6$ the initial charge of every face is 
	nonnegative. Faces only send charge by the rule (R1), i.e., to every incident $1$-vertex they send $2$ of charge. Let the 
	\textit{base} of $f$ be a face with all incident $1$-vertices removed. Notice that, by the girth condition, the bases of all faces have length at least $6$
	and that every incident $1$-vertex increases the length of the base by $2$. So, the number of $1$-vertices incident to a face $f$
	is at most $\frac{1}{2}(l(f) - 6)$ and the final charge of $f$ is at least $l(f) - 6 - 2\cdot \tfrac{1}{2}(l(f) - 6) = 0$.
	
	Now, we consider the final charge of a vertex $v$ regarding its degree:
	\begin{itemize}
		 \item{} \textit{$v$ is a $1$-vertex.} By Lemma~\ref{lm:onevertex}, the unique neighbor $u$ of $v$ is of degree at least $5$.
		 		By (R1), $v$ receives $2$ of charge from its incident face and $2$ of charge
		 		from $u$ by (R2). Hence it receives $4$ in total and its final charge is $0$.
		
		\item{} \textit{$v$ is a $2$-vertex.} By Lemma~\ref{lm:twovertex}, $v$ has at least one $5^+$-neighbor $u$. In order
				to have nonnegative charge, $v$ needs to receive at least $2$ of charge. If $v$ is
				weak, it receives $2$ from $u$ by (R3). In case when $v$ is semiweak, it receives $\frac{4}{3}$ from $u$ by (R4) and 
				$\frac{2}{3}$ from the $4_3$-neighbor by (R7), again $2$ in total. 
				Otherwise $v$ is strong and it receives $1$ from each of the two neighbors by (R5) and (R6).
				
		\item{} \textit{$v$ is a $3$-vertex.} The initial charge of $v$ is $0$ and it neither sends nor receives any charge, hence its
				final charge is also $0$.
				
		\item{} \textit{$v$ is a $4$-vertex.} By Lemma~\ref{lm:onevertex}, $v$ has no $1$-neighbor and by Lemma~\ref{lm:bigneighbor}, 
				$v$ has at most three $2$-neighbors. In case when $v$ has precisely three $2$-neighbors it sends $\frac{2}{3}$ to each
				of them by (R7), which is $2$ in total, otherwise it may send $1$ to each $2$-neighbor by (R6). 
				The final charge of $v$ is thus at least $8 - 6 - 2 = 0$.
				
		\item{} \textit{$v$ is a $5$-vertex.} Suppose first that $v$ is adjacent to a $1$-vertex $u$. By Lemma~\ref{lm:onevertex}, 
				$u$ is the only $2^-$-neighbor of $v$, hence $v$ sends $2$ by (R2) and its final charge is $2$. Therefore, we may
				assume that $v$ has no $1$-neighbor. If $v$ has at most two $2$-neighbors it sends at most $4$ of charge in total 
				by the rules (R3)--(R5), and it retains nonnegative charge. If $v$ has three $2$-neighbors
				none of them is weak by Lemma~\ref{lm:weakstrong}, and so it sends at most $4$ by (R4) or (R5). 
				Finally, if $v$ has four $2$-neighbors, all of them are strong by Lemma~\ref{lm:onlystrong}, so $v$ sends $4$ by (R5). It follows
				that $v$ has nonnegative final charge.		
		
		\item{} \textit{$v$ is a $k$-vertex, $k \ge 6$.} Let $n_1$ and $n_2$ be the numbers of $1$-neighbors and $2$-neighbors of $v$.
				By Lemma~\ref{lm:bigneighbor}, we have that $n_1 + n_2 \le k-1$. If $n_1 + n_2 = k - 1$, by Lemma~\ref{lm:onlystrong},
				it follows that $v$ has only strong $2$-neighbors, and so $n_1 = 0$. Hence the final charge of $v$ is 
				$2k - 6 - (k - 1) = k - 5 \ge 0$. In case when $n_1 + n_2 = k-2$, by Lemma~\ref{lm:weakstrong}, we have that there are 
				at least three nonweak $2$-neighbors of $v$, so its final charge is at least 
				$2k - 6 - 2(k - 2 - 3) - 3 \cdot \frac{4}{3} = 0$. Finally, if $n_1 + n_2 \le k-3$, the final charge of $v$ is
				at least $2k - 6 - 2(k-3) = 0$.
	\end{itemize}
	We have shown that the final charge of every vertex and face in $G$ is nonnegative, and so is the sum of all charges. Hence, a
	minimal counterexample to Theorem~\ref{thm:girth6} does not exist.
\end{proof}

\section{Proof of Theorem~\ref{thm:subcubic}}

To prove the theorem we follow the same procedure as in Section~\ref{sec:girth}. First, we list some properties of a
minimal counterexample to the theorem. Recall that $G$ is subcubic.

\subsection{Structure of minimal counterexample}

The first lemma considers the minimum degree and the neighborhood of $2$-vertices.
\begin{lemma}
	\label{lm:cubver}
	For a minimal counterexample $G$ the following claims hold:
	\begin{itemize}
		\item[$(a)$] the minimum degree of $G$ is at least $2$;
		\item[$(b)$] every $2$-vertex has two $3$-neighbors;
		\item[$(c)$] every $3$-vertex has at least two $3$-neighbors.
	\end{itemize}
\end{lemma}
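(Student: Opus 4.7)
The plan is to prove each of (a), (b), (c) by the standard ``remove and recolor'' technique. The key combinatorial input is that in a subcubic graph of girth at least $6$, all edges within distance $2$ of a given edge $uv$ are pairwise distinct (any coincidence would produce a cycle of length $3$, $4$, or $5$), so the $2$-neighborhood of $uv$ contains at most
\[
(d(u)-1)+(d(v)-1)+\sum_{x\in N(u)\setminus\{v\}}(d(x)-1)+\sum_{y\in N(v)\setminus\{u\}}(d(y)-1)
\]
edges; whenever fewer than $9$ of these are already colored, a free color remains for $uv$.

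For (a), I would suppose $v$ is a $1$-vertex with unique neighbor $u$, take a strong $9$-edge coloring $\sigma$ of $G-v$ by minimality, and try to extend to $uv$. The $2$-neighborhood of $uv$ contains at most $2$ edges at $u$ and at most $2$ further edges at each of the at most two neighbors of $u$ other than $v$, giving at most $6$ colored edges, so at least $3$ free colors remain. For (b), I would let the $2$-vertex $v$ have a $2$-neighbor $u$, write $w, u'$ for the other neighbors of $v, u$, remove only the edge $uv$, and take a strong $9$-edge coloring of $G-uv$. The $2$-neighborhood of $uv$ would then contain $uu'$, $vw$, and at most $2+2$ further edges at $u'$ and $w$, totalling at most $6$ colored edges, so again at least $3$ free colors remain.

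For (c), the counting becomes tight. Suppose a $3$-vertex $v$ has two $2$-neighbors $u_1, u_2$ with respective other neighbors $u_1', u_2'$, and let $u_3$ denote the third neighbor of $v$. My plan is to remove both edges $u_1v$ and $u_2v$, take a strong $9$-edge coloring of the resulting graph, and extend in the order $u_1v, u_2v$. The colored edges in the $2$-neighborhood of $u_1v$ will be $u_1u_1'$, $vu_3$, $u_2u_2'$, together with at most two further edges at $u_1'$ and at most two further edges at $u_3$, for a total of at most $7$; hence $u_1v$ will have at least $2$ free colors. After $u_1v$ is colored, the $2$-neighborhood of $u_2v$ additionally contains $vu_1$ as a colored neighbor, so it has at most $8$ forbidden colors and still at least one free color, which completes the extension and contradicts minimality. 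The main obstacle is that this final count barely succeeds: it requires that all listed edges be pairwise distinct, which is precisely where the girth-$6$ hypothesis is needed, since any $3$-, $4$-, or $5$-cycle would collapse two of them.
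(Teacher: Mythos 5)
Your part (a) is exactly the paper's argument. Parts (b) and (c), however, contain a genuine gap coming from your choice to delete \emph{edges} rather than \emph{vertices}. When you take a strong $9$-edge coloring $\sigma$ of $G-uv$, the colors are only guaranteed to respect distances measured in $G-uv$, not in $G$. In (b) the edges $uu'$ and $vw$ are at distance exactly $2$ in $G$ (the only length-$2$ path between them in $L(G)$ goes through $uv$, since any other connection would force a cycle of length at most $4$), but after deleting $uv$ they are at distance at least $3$, so $\sigma$ may legitimately assign them the \emph{same} color. In that case no choice of color for $uv$ yields a strong edge coloring of $G$: the conflict is between two already-colored edges, not between $uv$ and its neighborhood. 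The same problem occurs in (c) with the pairs $(u_1u_1',vu_3)$ and $(u_2u_2',vu_3)$. This is precisely the pitfall the paper flags in Section~2 (``we do not decrease the distance between the edges of the same color such that the distance between them would be less than $3$''), and it is why the paper's proofs of (b) and (c) delete a vertex ($v$, resp.\ the $2$-neighbor $w$): vertex deletion never creates such hidden conflicts, because any length-$2$ path in $L(G)$ through an edge at the deleted vertex forces the two endpoints of that path to share a vertex and hence to remain adjacent in the subgraph.

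The gap is repairable within your framework: if $\sigma(uu')=\sigma(vw)$ in (b), the edge $vw$ has at most $7$ colored edges in its $2$-neighborhood in $G$, so you can first recolor $vw$ and then color $uv$; similarly in (c) one can recolor $vu_3$, which has at most $8$ constraints. But as written, your counts only certify a free color for the deleted edges themselves and silently assume the inherited coloring is already a valid partial strong edge coloring of $G$, which is exactly the step that fails. Either add these recoloring steps or switch to vertex deletion as in the paper.
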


\begin{proof}
	\begin{itemize}
		\item[$(a)$] Suppose $v$ is a $1$-vertex in $G$ and let $u$ be its unique neighbor. By the minimality, there is a 
					strong $9$-edge coloring $\sigma$ of $G-v$. It is easy to see that there are at most $6$ colored
					edges in the $2$-neighborhood of $uv$, hence we easily extend $\sigma$ to $G$.
		\item[$(b)$] Suppose, to the contrary, that $u$ and $v$ are adjacent $2$-vertices in $G$. Let $w$ be the second neighbor of $v$.
					Let $G'= G - v$ and $\sigma$ a strong edge coloring of $G'$ which, by the minimality of $G$, uses at most $9$ colors. 
					An easy calculation shows that $vw$ has at least two, and $uv$ has at least three available colors. Hence, $\sigma$ can be extended to $G$.
		\item[$(c)$] Let $v$ be a $3$-vertex with $2$-neighbors $u$ and $w$, and let $z$ be the second neighbor of $w$. 
					By the minimality, the graph $G' = G - w$ has a strong edge coloring $\sigma$ with at most $9$ colors. 
					Notice that $wz$ has at most eight used colors in the $2$-neighborhood, so we can color it. 
					Finally, we color $vw$ which also has at most eight colors used in the $2$-neighborhood, and so establish the claim.					
	\end{itemize}
\end{proof}

In the next two lemmas we consider $6$ and $7$-faces incident to $2$-vertices.
\begin{lemma}
	\label{lm:six}
	There is no $6$-face incident to a $2$-vertex in $G$.
\end{lemma}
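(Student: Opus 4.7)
The plan is to argue by contradiction. Suppose that $G$ contains a $6$-face $f$ bounded by $v, u, a, b, c, w$ (in cyclic order) with $v$ a $2$-vertex. Since $G$ has girth $6$, these six vertices are distinct. By Lemma \ref{lm:cubver}(b) the two neighbors $u$ and $w$ of $v$ on $f$ are $3$-vertices, and their off-face neighbors $u'$ and $w'$ lie outside $f$ by the girth condition. By the minimality of $G$, the graph $G - v$ admits a strong $9$-edge coloring $\sigma$, which I would try to extend to $G$ by colouring only the two pendant edges $uv$ and $vw$.

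A routine count shows that the $2$-neighborhood of $uv$ contains, besides $uv$ itself and the uncolored edge $vw$, at most $8$ coloured edges: $ua$ and $uu'$ at distance one; at most two further edges at each of $a$ and $u'$; and $wc, ww'$ at distance two through $v$. Hence $|A(uv)| \ge 1$, and symmetrically $|A(vw)| \ge 1$. The extension succeeds whenever $|A(uv)| \ge 2$, $|A(vw)| \ge 2$, or $A(uv) \ne A(vw)$, so the only obstruction is the rigid case $A(uv) = A(vw) = \{\gamma\}$ for a single colour $\gamma$. In that case all $8$ edges on each side must exist and carry $8$ distinct colours, forcing $a, u', c, w'$ to be $3$-vertices; moreover, writing $S$ for the set of colours of the four shared edges $ua, uu', wc, ww'$, the remaining four edges at $\{a, u'\}$ and the remaining four edges at $\{c, w'\}$ both exhaust the same $4$-element set $\{1,\dots,9\}\setminus(S\cup\{\gamma\})$.

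To finish I would break this rigid configuration by a local recoloring. The natural attempt is to uncolor a shared edge, say $ua$, reassign it a colour $\beta \ne \sigma(ua)$ drawn from outside the palette of $N_2(ua)$, then colour $uv$ with the now-free $\sigma(ua)$ and $vw$ with $\gamma$. The main obstacle lies precisely here: one must show that a valid $\beta$ always exists. The new forbidden colours for $\beta$ come only from edges in $N_2(ua) \setminus N_2(uv)$, namely those incident to $b$ and to $a'$, and the forced coincidence of the $u$-side and $w$-side exclusive colour sets identified above should keep the forbidden list short enough for a direct count to leave at least one admissible $\beta$. If the direct recoloring of $ua$ happens to be blocked, one can fall back on the symmetric recoloring at $wc$, or on a short two-colour swap along the face. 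The case analysis has to exploit the girth-$6$ hypothesis (controlling how the neighborhoods of $a'$ and $b$ can interact with the rest of the configuration) in order to produce, in every subcase, a valid extension of $\sigma$, yielding the sought contradiction.
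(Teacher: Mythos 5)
Your reduction reproduces the paper's opening moves: delete the $2$-vertex, take a strong $9$-edge colouring of the rest, note that each pendant edge has at least one available colour, and isolate the unique obstruction as the rigid case $A(uv)=A(vw)=\{\gamma\}$, in which the eight coloured edges on each side carry eight distinct colours and the four edges at $\{a,u'\}$ and at $\{c,w'\}$ both realise the same set $T=\{1,\dots,9\}\setminus(S\cup\{\gamma\})$. All of this is correct and matches the paper. But the entire content of the lemma lies in the part you leave open, and your proposed escape does not survive a count. To recolour $ua$ with $\beta$ you must avoid $\gamma$ (else $vw$ loses its only colour), $\sigma(ua)$ and $\sigma(uu')$, and every colour in $N_2(ua)$; the four coloured edges at $a$ and $u'$ already contribute all four colours of $T$, so before you even look at $b$ and $a'$ only the two colours $\sigma(wc)$ and $\sigma(ww')$ remain as candidates. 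The rigidity you derived says nothing about the four edges incident to $b$ and $a'$, which can carry both of those colours and block the move entirely; the symmetric recolouring at $wc$ dies the same way at the hands of the edges around $c$'s and $w'$'s second neighbourhoods, and ``a short two-colour swap along the face'' is a placeholder, not an argument.

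The paper's proof is essentially nothing but this missing analysis. After colouring $v_0v_5$ greedily it assumes the nine edges around $v_0v_1$ carry nine distinct colours and then, through a chain of ``otherwise we recolour and win'' steps, forces specific colours onto specific edges ($1$ and $4$ cannot sit near $v_5$, colour $7$ must appear at $u_2$, $8$ at $u_2$ or on $v_3u_3$, $4$ at $u_4$, $1$ at $u_4$ or on $v_3u_3$, and so on), and only after accumulating this forced structure does it split into three residual cases, each resolved by an explicit swap of two or three face edges. Until you carry out an analysis of comparable depth, your proof has a genuine gap precisely where the difficulty is concentrated.
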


\begin{proof}
	Let $f$ be a $6$-face with an incident $2$-vertex and let the vertices of $f$ be labeled as in Fig.~\ref{fig:six}.
	\begin{figure}[htb]
		$$
			\includegraphics{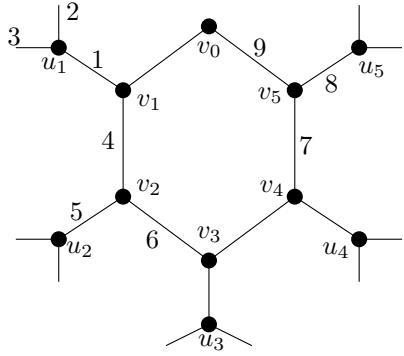}
		$$
		\caption{A $6$-face with an incident $2$-vertex and the nine distinct colors assigned to the edges
			in the $2$-neighborhood of $v_0v_1$.}
		\label{fig:six}
	\end{figure}
	By the minimality of $G$, there is a strong $9$-edge coloring $\sigma$ of $G' = G - v_0$. Now, we only need to color 
	the edges $v_0v_1$ and $v_0v_5$. Since there are only eight colored edges in the $2$-neighborhood
	of $v_0v_5$, we color it with a free color. 
	
	The edge $v_0v_1$ has now nine colored edges in the $2$-neighborhood. In 
	case that these nine edges only use at most eight colors, we can color $v_0v_1$ with a free color, thus we may assume that
	all nine edges have different colors assigned as it is shown in Fig.~\ref{fig:six}. Moreover, suppose that the color $1$ appears
	on $v_3v_4$, $v_4u_4$, or one of the edges incident to $u_5$. Then, there are at most seven distinct colors in the $2$-neighborhood
	of $v_0v_5$, so we may color it with a color different from $9$ and assign $9$ to $v_0v_1$. Similarly, the edge $v_4u_4$ and the
	edges incident to $u_5$ are not colored by $4$. Hence, we may assume that $\sigma(v_3v_4)$ is $2$ or $3$, say $2$, and the edge
	$v_4u_4$ and one of the edges incident to $u_5$ have the colors $3$ and $5$. The third edge incident to $u_5$ has color $6$ by
	the same argumentation.
		
	Next, one of the edges incident to $u_2$ has color $7$, otherwise we recolor $v_1v_2$ with $7$
	and color $v_0v_1$ with $4$. The same reasoning shows that the edge $v_3u_3$ or one of the edges incident to $u_2$ 
	has color $8$. Equivalently, one of the edges incident to $u_4$ has color $4$, for otherwise we recolor $v_4v_5$ with $4$
	and color $v_0v_1$ with $7$. Similarly, one of the edges $v_3u_3$ or an edge incident to $u_4$ has color $1$.
	
	It remains to consider three cases regarding the assignment of colors to the edges mentioned above.
	\begin{itemize}
		\item[$(a)$] \textit{Suppose that $\sigma(v_3u_3) = 8$.} Then, the third edge incident to $u_4$ has color
				$1$. Moreover, the third edge incident to $u_2$ has color $2$, otherwise we color $v_2v_3$
				with $2$ and $v_3v_4$ with $6$, which enables us to color $v_0v_1$ with $6$. 
				One of the edges incident to $u_3$ has color $4$, otherwise we color $v_2v_3$ with $4$,
				$v_1v_2$ with $6$, $v_0v_5$ with $4$, and finally $v_0v_1$ with $9$.				
				Similarly, one of the edges incident to $u_3$ has color $7$, otherwise we set $\sigma(v_3v_4) = 7$ and $\sigma(v_4v_5) = 2$ and 
				color $v_0v_1$ with $7$. But now, we can color $v_3v_4$ with $9$ and $v_0v_5$ with $2$ and set $\sigma(v_0v_1) = 9$, and so
				obtain the coloring of all the edges of $G$ (see Fig.~\ref{fig:six_1}).
				\begin{figure}[htb]
					$$
						\includegraphics{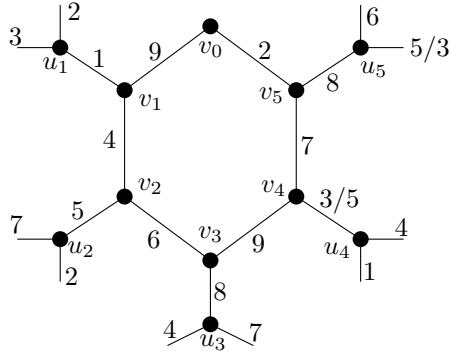}
					$$
					\caption{Coloring of the $6$-face in case $(a)$.}
					\label{fig:six_1}
				\end{figure}
		\item[$(b)$] \textit{Suppose that $\sigma(v_3u_3) = 1$.} Then, one of the edges incident to $u_2$ has color $8$. Therefore
				one of the edges incident to $u_4$ has color $6$, otherwise we set $\sigma(v_2v_3) = 2$, $\sigma(v_3v_4)=6$,
				and $\sigma(v_0v_1) = 6$. Similarly, the edges incident to $u_3$ must have colors $4$ and $7$. Otherwise, if there
				is no edge of color $4$, we swap the colors of $v_1v_2$ and $v_2v_3$, color $v_0v_1$ with $9$ and $v_0v_5$ with $4$.
				In case when there is no edge of color $7$ incident to $u_3$, we swap the colors of $v_3v_4$ and $v_4v_5$, and color $v_0v_1$ with $7$.								
				Finally, having such a coloring, we can swap the colors of $v_3v_4$ and $v_0v_5$ and color $v_0v_1$ with $9$ (see Fig.~\ref{fig:six_2}).
				\begin{figure}[htb]
					$$
						\includegraphics{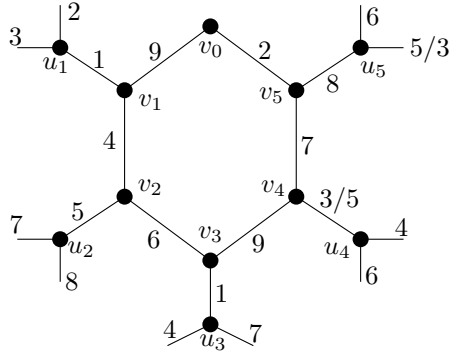}
					$$
					\caption{Coloring of the $6$-face in case $(b)$.}
					\label{fig:six_2}
				\end{figure}	
		\item[$(c)$] \textit{Suppose that one of the edges incident to $u_2$ has color $8$ and one of the edges
					incident to $u_4$ has color $1$.}	We can swap the colors of $v_2v_3$ and $v_3v_4$,
					which enables us to color $v_0v_1$ with $6$ and hence extend the coloring $\sigma$ to $G$.
	\end{itemize}
\end{proof}

\begin{lemma}
	\label{lm:seven}
	Every $7$-face in $G$ is incident to at most one $2$-vertex.
\end{lemma}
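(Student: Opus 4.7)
Suppose, for contradiction, that some $7$-face $f = v_0 v_1 v_2 v_3 v_4 v_5 v_6$ of $G$ is incident to two $2$-vertices. Lemma~\ref{lm:cubver}(b) forbids adjacent $2$-vertices, so up to rotation and reflection of $f$ we may take the two $2$-vertices to be $v_0$ and $v_i$ with $i \in \set{2,3}$. By Lemma~\ref{lm:cubver}(b), the vertices $v_1$ and $v_6$ (and, when $i = 3$, also $v_2$ and $v_4$) are $3$-vertices, each with a unique neighbor outside $f$, denoted $u_1, u_6, u_2, u_4$ respectively. The plan is to delete $v_0$, use minimality of $G$ to produce a strong $9$-edge coloring $\sigma$ of $G - v_0$, and then extend $\sigma$ to $G$ by coloring $v_0 v_6$ and $v_0 v_1$ in turn.

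For $i = 2$ the low degree of $v_2$ already saves a color slot: the $2$-neighborhood of $v_0 v_1$ in $G$ contains only $8$ edges, because no edge $v_2 u_2$ exists. Since $v_0 v_6$ sees at most $8$ colored edges in $G - v_0$, it receives some color, and then $v_0 v_1$ still has at most $8$ forbidden colors out of $9$, so a free color remains. This subcase is a direct counting argument.

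For $i = 3$ the $2$-neighborhood of $v_0 v_1$ in $G$ consists of exactly the nine edges $v_0 v_6,\ v_1 v_2,\ v_1 u_1,\ v_5 v_6,\ v_6 u_6,\ v_2 v_3,\ v_2 u_2$ and the two remaining edges at $u_1$. I first color $v_0 v_6$ greedily; if the nine edges around $v_0 v_1$ do not use nine distinct colors, we are done, otherwise the local coloring is rigid and I would proceed in the spirit of Lemma~\ref{lm:six}. Specifically, I would label these nine colors canonically, and then for each of $v_1 v_2$, $v_1 u_1$ and $v_0 v_6$ ask which of the nine colors it could be swapped with; each candidate alternative forces an equality of colors on one of the outer edges at $u_1$, $v_2$, $u_6$, or $v_5$, and cascading these forced equalities either frees a color for $v_0 v_1$ directly or narrows the configuration down to a handful of explicit colorings that admit a final swap-and-recolor. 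The crucial extra leverage compared to Lemma~\ref{lm:six} is the second $2$-vertex $v_3$: since $\deg(v_3) = 2$, the edge $v_2 v_3$ has a smaller $2$-neighborhood than a generic edge of $G$, and so is the natural candidate to absorb the swap that makes room for $v_0 v_1$.

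The main obstacle is precisely this last step in the distance-$3$ subcase: verifying that every rigid coloring of $G - v_0$ admits a successful local swap. The analysis will fan out into several subcases indexed by which colors appear on the two edges at $u_1$, on $v_2 u_2$, on $v_5 v_6$ and $v_6 u_6$, and on the pair $v_2 v_3, v_3 v_4$; ensuring that no subcase escapes will require careful bookkeeping, but the extra slack supplied by the second $2$-vertex $v_3$, together with the swap-and-recolor technique of Lemma~\ref{lm:six}, should close every case.
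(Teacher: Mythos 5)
Your reduction of the configuration is essentially right, and your disposal of the distance-$2$ subcase by a direct count is correct (in fact that subcase is vacuous: by Lemma~\ref{lm:cubver}$(c)$ a $3$-vertex has at most one $2$-neighbor, so the vertex between two $2$-vertices at face-distance $2$ cannot exist; this is why the paper says the arrangement of the two $2$-vertices is unique). The problem is the distance-$3$ subcase, which is the only one that actually occurs, and there your proof stops at a plan. You delete only $v_0$, observe that $v_0v_1$ may see nine distinctly colored edges, and then say that a swap-and-recolor analysis ``in the spirit of Lemma~\ref{lm:six}'' should close every case. That analysis is the entire content of the lemma and it is not carried out. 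It is also not a one-line fix: for instance, the natural candidate edge $v_2v_3$ has eight colored edges in its own $2$-neighborhood (only $v_0v_1$ is uncolored there), so the counting guarantees it just one admissible color, which may be the color it already carries; hence you cannot in general recolor it to release its color for $v_0v_1$, and you are pushed into exactly the kind of multi-case cascade that made Lemma~\ref{lm:six} long. So as written there is a genuine gap.

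The paper avoids all of this by choosing a different reduction: it deletes all four consecutive internal vertices of the path between (and including) the two $2$-vertices, i.e.\ $G - \set{v_2,v_3,v_4,v_5}$ in its labeling, leaving seven uncolored edges. Because so many edges in each $2$-neighborhood are simultaneously uncolored, every uncolored edge has at least $3$ (some at least $5$) free colors, and a greedy coloring in a carefully chosen order --- with one lookahead step, choosing the color of $v_3v_4$ so that $v_1v_2$ keeps three options --- finishes without any case analysis. If you want to salvage your single-vertex-deletion approach you must actually enumerate and resolve the rigid colorings; otherwise, switch to deleting the whole low-degree segment as the paper does.
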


\begin{proof}
	By Lemma~\ref{lm:cubver}, we infer that a $7$-face with three incident $2$-vertices cannot appear in a minimal counterexample.
	Moreover, the arrangement of the possible two $2$-vertices incident to a $7$-face is unique. Let $f$ be such a face and let its 
	vertices be labeled as in Fig.~\ref{fig:7face}.
	\begin{figure}[htb]
		$$
			\includegraphics{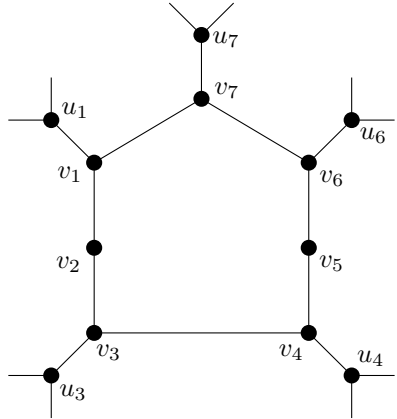}
		$$
		\caption{A $7$-face with two incident $2$-vertices.}
		\label{fig:7face}
	\end{figure}
	
	By the minimality of $G$, there exists a strong $9$-edge coloring $\sigma$ of $G - \set{v_2,v_3,v_4,v_5}$. 
	It is easy to see that after assigning the colors of $\sigma$ to the edges of $G$, there are seven
	noncolored edges. A simple count shows that the edge $v_1v_2$ has at most $6$ colored edges in the $2$-neighborhood and hence
	at least $3$ free colors. Similarly, the edges $v_2v_3$, $v_3v_4$, and $v_4v_5$ have at least $5$ free colors,
	and $v_3u_3$, $v_4u_4$, and $v_5v_6$ have at least $3$ free colors. 
	
	Now, color $v_4u_4$ with one of its free colors and $v_3v_4$ with the a color such that $v_1v_2$ retains at least	
	three available colors.
	Finally, color $v_3u_3$, $v_5v_6$, $v_4v_5$, $v_2v_3$, and $v_1v_2$ in the given order. It is easy to see that each of the 
	edges that are being colored always has at least one free color. Thus, the coloring $\sigma$ can be extended to $G$, a contradiction.
\end{proof}

From the proof of Lemma~\ref{lm:seven}, it also follows that the distance between the vertices of degree $2$ in $G$ is at least $4$.

\subsection{Discharging}

We use the same initial charge for vertices and faces as in the previous section, i.e.,
\begin{eqnarray*}
	\textrm{ch}_0(v) &=& 2\,d(v) - 6,\quad v \in V(G);\\
	\textrm{ch}_0(f) &=& l(f) - 6,\quad f \in F(G).\\
\end{eqnarray*}
We redistribute the charge using just one discharging rule:
\begin{itemize}
	\item[\textbf{(R)}] Every face sends $1$ to every incident $2$-vertex.
\end{itemize}

\medskip
Using the structure properties of a minimal counterexample $G$ and the discharging rule, we prove Theorem~\ref{thm:subcubic}.
\begin{proof}
	We show that after applying the discharging rule every vertex and face in $G$ has nonnegative charge which contradicts the
	fact that the total sum of initial charges is $-12$.
	
	By Lemma~\ref{lm:cubver}, there are only vertices of degree $2$ and $3$ in $G$. Every $2$-vertex is incident to two faces
	hence it receives $2$ of charge, so its final charge is $0$. On the other hand, $3$-vertices have initial charge $0$ and they
	send no charge, therefore their charge remains $0$.
	
	It remains to consider the faces. By Lemma~\ref{lm:six}, $6$-faces send no charge, so their charge remains $0$.
	By Lemma~\ref{lm:seven}, $7$-face sends at most $1$, hence it retains nonnegative charge. Finally, every $k$-face $f$,
	$k \ge 8$, has at most $\lfloor \frac{1}{3}l(f) \rfloor$ incident $2$-vertices by Lemma~\ref{lm:cubver}, therefore
	the final charge of $f$ is at least $l(f) - 6 - \lfloor \frac{1}{3}l(f) \rfloor \ge \lceil \frac{2}{3}l(f) \rceil - 6 \ge 0$.
\end{proof}

\section{Discussion}

In the introduction we mentioned that Faudree et al.~\cite{FauGyaSchTuz90} introduced a construction of planar graphs of girth $4$ with
the strong chromatic index equal to $4\Delta - 4$. We have shown that if the girth of a planar graph is at least $6$, $3 \,\Delta + 6$
colors suffice. This bound is not tight, however, there exist planar graphs with high girth and the strong chromatic index considerably close
to the bound given by Theorem~\ref{thm:girth6}. Consider an odd cycle of length $k$ and append $d - 2$, $d \ge 3$, leaves to each of the $k$ initial vertices 
(see Fig.~\ref{fig:conj} for an example). A planar graph with girth $k$ is obtained, we denote it by $C_k^d$. 
\begin{figure}[htb]
	$$
		\includegraphics{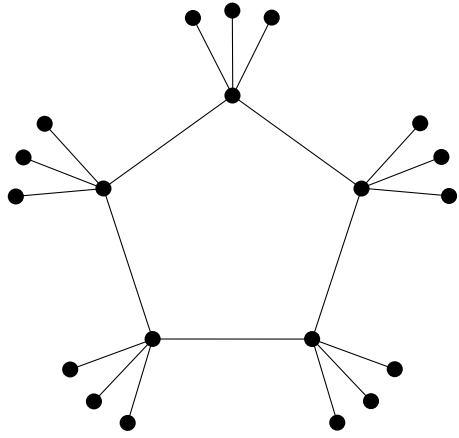}
	$$
	\caption{The strong chromatic index of $C_5^5$ is $10$.}
	\label{fig:conj}
\end{figure}	
\begin{proposition}
	\label{prop:frac}
	For every odd integer $k \ge 3$ and every integer $d \ge 3$, it holds
	$$
		\Bigg \lceil \frac{2k(d-1)}{k-1} \Bigg \rceil \le \chi_s'(C_k^d) \le \Bigg \lceil \frac{2k (d-2)}{k-1} \Bigg \rceil + 5\,.
	$$	
\end{proposition}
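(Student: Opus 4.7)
The plan is to bound $\chi_s'(C_k^d)$ from both sides using the same parameter: the maximum size $\mu$ of an induced matching in $C_k^d$. Since $|E(C_k^d)| = k(d-1)$ and each color class of a strong edge coloring is an induced matching, we have
\[
  \chi_s'(C_k^d) \ge \left\lceil \frac{k(d-1)}{\mu} \right\rceil.
\]
The core of the lower bound is to show $\mu \le (k-1)/2$. For this I use an arc-packing argument on the cyclic sequence $v_1,\ldots,v_k$: to a pendant edge at $v_i$ associate the open arc $(i-1,i+1)$ of length $2$, and to a cycle edge $v_iv_{i+1}$ the open arc $(i-1,i+2)$ of length $3$. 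A short case analysis (pendant-pendant, pendant-cycle, cycle-cycle) shows that the induced-matching condition, translated to the line graph, forces the associated arcs to be pairwise disjoint on the cycle of circumference $k$. Consequently, if $M$ contains $p$ pendant edges and $c$ cycle edges, then $2p + 3c \le k$, so $|M| = p + c \le k/2$; parity then gives $|M| \le (k-1)/2$, and the lower bound follows.

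For the upper bound, I color pendant edges and cycle edges separately using disjoint palettes. Let $c := \lceil 2k(d-2)/(k-1) \rceil$. The $(d-2)$-fold chromatic number of the odd cycle $C_k$ equals exactly $\lceil 2k(d-2)/(k-1) \rceil$ (a standard identity, immediate from the fractional chromatic number $2k/(k-1)$ of $C_k$ realized via $k$ cyclically shifted independent sets of size $(k-1)/2$). Therefore one obtains an assignment $v_i \mapsto S_i \subset \{1,\ldots,c\}$ with $|S_i| = d-2$ and $S_i \cap S_{i+1} = \emptyset$. Assigning the $d-2$ pendants at each $v_i$ bijectively to the colors of $S_i$ yields a valid strong edge coloring of the pendant subgraph, since pendants at the same vertex are at line-graph distance $1$ (resolved by bijectivity), pendants at adjacent cycle vertices are at line-graph distance $2$ (resolved by $S_i \cap S_{i+1} = \emptyset$), and pendants farther apart on the cycle are at line-graph distance at least $3$. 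The $k$ cycle edges form a copy of $C_k$; since $\chi_s'(C_k) \le 5$ for every $k$, I color them using a fresh palette $\{c+1,\ldots,c+5\}$. Disjoint palettes eliminate any cycle/pendant conflict at line-graph distance at most $2$, producing a strong edge coloring of $C_k^d$ using $c + 5$ colors.

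The main obstacle is the arc-packing inequality $\mu \le (k-1)/2$, which requires careful bookkeeping but ultimately reduces to an elementary geometric check on the cycle; beyond this, the proof is a clean combination of the multi-coloring identity for odd cycles with the folklore bound $\chi_s'(C_k) \le 5$.
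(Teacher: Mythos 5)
Your proof is correct and follows essentially the same strategy as the paper: the lower bound counts the $k(d-1)$ edges against the maximum induced-matching size $(k-1)/2$, and the upper bound colors the pendant edges with $\lceil 2k(d-2)/(k-1)\rceil$ colors via a $(d-2)$-fold coloring of the odd cycle and spends $5$ fresh colors on the cycle edges. The only differences are presentational: your arc-packing argument supplies a rigorous justification of the bound $\mu \le (k-1)/2$, which the paper merely asserts, and your appeal to the identity $\chi_{d-2}(C_k) = \lceil 2k(d-2)/(k-1)\rceil$ packages as a known result what the paper carries out explicitly with its modular construction of the color sets $C_i$.
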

\begin{proof}
	First, consider the lower bound. There are $(d-1) k$ edges in $C_k^d$ and each color can be assigned to at most $(k-1) / 2$ of them in order 
	to satisfy the conditions of the strong edge coloring. Hence, the strong chromatic index of $C_k^d$ is at least $\lceil 2k \cdot (d-1) / (k-1) \rceil$. 
	
	To show that the upper bound holds, we first construct a strong edge coloring of the pendent edges with $2(d-2) + \ell$ colors, where
	$\ell = \big \lceil 2(d-2)/(k-1) \big \rceil$. 
	Let $v_1,v_2,\dots,v_k$ be the vertices of the cycle in $C_k^d$. The pendent edges incident to two consecutive vertices must receive distinct colors, 
	so $2(d-2)$ distinct colors must be used on the pendent edges incident to any pair of adjacent vertices of the cycle. We color the pendent edges 
	incident to the vertex $v_i$ by the colors in $C_i = \set{x_1^i,x_2^i,\dots,x_{d-2}^i}$, for $i \in \set{1,2,\dots,2 t + 1}$. Here, we define
	$t = \lceil (d-2) / \ell \rceil$, and $x_j^i = (j + (i-1)\cdot(d-2)) \mod{(2(d-2)+\ell)}$ for $j\in\set{1,2,\dots,d-2}$. 
	Observe that $2t + 1 \le k$.	
	
	Now, we color the pendent edges incident to the remaining vertices $v_{2t + 2}, v_{2t+3},\dots, v_k$ with the colors from the sets $C_{2t}$ 
	and $C_{2t+1}$. The pendent edges incident to the vertices with even indices receive the colors from $C_{2t}$, and the pendent edges incident to the
	vertices with odd indices receive the colors from $C_{2t + 1}$.
	Obviously, the pendent edges incident to two adjacent vertices $v_i$ and $v_{i+1}$, for $i \le k-1$, receive 
	distinct colors. Consider now the colors of the pendent edges incident to the vertex $v_{k}$. They receive the colors from $C_{2t+1}$, so we need to
	show that $C_{1} \cap C_{2t+1} = \emptyset$, i.e. $(d-2) \le 2t (d-2) \mod{(2(d-2)+\ell)} \le (d-2) + \ell$. Consider the following reduction:
	\begin{eqnarray*}
		d-2 \le 2\,t\,(d-2) \!\!\!\!&\mod{(2(d-2)+\ell)}& \le d-2 + \ell \\ 
		d-2 \le (2\,t\,(d-2) +t\,\ell - t\,\ell ) \!\!\!\!&\mod{(2(d-2)+\ell)}&  \le d-2 + \ell \\ 
		d-2 \le (2\,(d-2) +\ell - t\,\ell ) \!\!\!\!&\mod{(2(d-2)+\ell)}& \le d-2 + \ell \\ 
		0 \le ((d-2) +\ell - t\,\ell ) \!\!\!\!&\mod{(2(d-2)+\ell)}& \le \ell
	\end{eqnarray*}
	Obviously, $d-2 \le t\,\ell \le d-2 +\ell$, hence, $d-2 < x_j^k \le 2(d-2) + \ell$, for every color $x_j^k\in C_k$, so our coloring of the pendent 
	edges of $C_k^d$ is strong.	Finally, we use at most $5$ additional colors to color the edges of the cycle. This establishes the upper bound.
\end{proof}
Let us mention that $5$ additional colors are used only when $k = 5$, otherwise $4$ or $3$ colors suffice. Moreover, a longer argument shows that
instead of using only new colors on the cycle, some colors of the pendent edges could be used. Notice also that 
$\big \lceil 2k (d-2)/ (k-1) \big \rceil + 5 \le \big \lceil 2k (d-1)/(k-1) \big \rceil + 3$.

The graphs $C_k^d$ do not achive the highest strong chromatic index among the planar graphs of girth $k$ and maximum degree $d$ (see Fig.~\ref{fig:high} for an example).
\begin{figure}[ht]
	$$
		\includegraphics{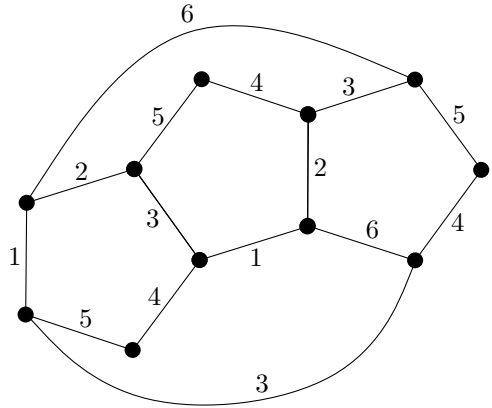}
	$$
	\caption{A planar graph with girth $5$, maximum degree $3$, the strong chromatic index $6$, and a strong edge coloring. 
		The graph $C_5^3$ has the strong chromatic index $5$.}
	\label{fig:high}
\end{figure}
However, we believe that they achieve it up to the constant, thus we propose the following conjecture.
\begin{conjecture}
	\label{conj:normal}
	There exists a constant $C$ such that for every planar graph $G$ of girth $k \ge 5$ and maximum degree $\Delta$ it holds
	$$
		\chi_s'(G) \le \Bigg \lceil \frac{2k (\Delta-1)}{k-1} \Bigg \rceil + C\,.
	$$
\end{conjecture}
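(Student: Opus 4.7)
The plan is to extend the discharging framework of Theorems~\ref{thm:girth6} and~\ref{thm:subcubic} to a scheme whose flow rates reflect the fractional rate $2k/(k-1)$ forced by the extremal family $C_k^d$. Fix $k \ge 5$ and $\Delta \ge 3$, put $N = \lceil 2k(\Delta-1)/(k-1)\rceil + C$ for an absolute constant $C$ to be chosen, and assume for contradiction that $G$ is a minimal planar graph of girth at least $k$, maximum degree at most $\Delta$, and $\chi_s'(G) > N$.

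The first task is to build a strengthened catalogue of reducible configurations. Beyond analogues of Lemmas~\ref{lm:onevertex}--\ref{lm:weakstrong}, I would aim for quantitative statements of the form: whenever a subgraph $H$ of $G$ is sparse enough or pendant-dense enough, any strong edge colouring of $G - H$ with $N$ colours extends to $G$. The key observation is that for an edge $uv$ whose endpoints lie outside all short cycles, the $2$-neighbourhood contains roughly $2(\Delta-1)$ edges plus a correction controlled by the girth, yielding about $2k(\Delta-1)/(k-1)$ used colours and a surplus of $C$ free colours; more intricate re-colouring arguments, in the spirit of those performed in Lemma~\ref{lm:six} and Lemma~\ref{lm:seven}, would handle configurations where this count is tight.

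Next, I would design discharging rules tuned to the girth $k$: initial charges $\textrm{ch}_0(v)$ and $\textrm{ch}_0(f)$ derived from Euler's formula so that the total is a fixed negative constant while $k$-faces start at zero and $3^+$-vertices start nonnegative whenever $k \ge 6$, and heavy vertices then redistribute charge to adjacent $2^-$-vertices at rates proportional to $2/(k-1)$, with longer faces subsidising their incident low-degree vertices by a rule analogous to (R1) of Section~\ref{sec:girth}. A case analysis on the neighbourhood patterns of each vertex, paralleling the treatment following rules (R1)--(R7), should then force every vertex and face to retain nonnegative final charge, contradicting the negative initial total.

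The main obstacle, I expect, is extracting a single constant $C$ genuinely uniform in both $k$ and $\Delta$. The construction $C_k^d$ shows that essentially no slack is available in the leading term, so each reducibility lemma must shave off colours at the precise rate $2/(k-1)$; for moderate $k$ this looks within reach of a careful refinement of the present scheme, but for very large $k$ the target rate approaches $2\Delta$ and leaves almost no room for error, while for $k = 5$ the $3\Delta$-type starting bound of Section~\ref{sec:girth} is not even available. It seems plausible that a discharging argument alone will not suffice, and that a probabilistic or entropy-compression ingredient exploiting the near-tree structure of distance-two balls in graphs of large girth (in the spirit of Molloy--Reed~\cite{MolRee97}) will be needed to close the gap uniformly in $k$.
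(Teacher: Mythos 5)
This statement is Conjecture~\ref{conj:normal}: the paper offers no proof of it, and it remains open. What you have written is not a proof either --- it is a research programme, and you say as much in your final paragraph, where you concede that discharging alone may not suffice and that some additional probabilistic or entropy-compression ingredient ``will be needed to close the gap.'' A proposal that ends by identifying the missing idea has a genuine gap by its own admission: no reducible configuration is actually verified, no discharging rules or charges are written down, and no value of $C$ is extracted, so there is nothing here that could be checked or completed mechanically.

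Beyond the admitted incompleteness, the central quantitative heuristic is wrong. You assert that for an edge $uv$ away from short cycles the $2$-neighbourhood contains ``roughly $2(\Delta-1)$ edges plus a correction controlled by the girth.'' In fact the $2$-neighbourhood of an edge contains on the order of $\Delta^2$ edges regardless of girth --- large girth does not bound vertex degrees, it only makes the ball tree-like. The target bound $\lceil 2k(\Delta-1)/(k-1)\rceil + C$ is barely above $2(\Delta-1)$, i.e.\ barely above the number of edges merely \emph{adjacent} to $uv$, so a greedy count over the $2$-neighbourhood falls short by a factor of order $\Delta$. Any proof must instead exploit massive colour reuse among the $\Theta(\Delta^2)$ edges at distance two (as happens for trees, where $\chi_s'=2\Delta-1$ is achieved by a carefully ordered colouring, not by local counting). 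This is precisely the difficulty that makes the statement a conjecture rather than a corollary of the methods of Sections~2 and~3, and your sketch does not engage with it.
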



\bibliographystyle{acm}
{\small
	\bibliography{MainBase}
}

\end{document}